\begin{document}
\title[Maximal regularity and PDEs]{Maximal regularity as a tool for partial differential equations}

\author[Sylvie Monniaux\hfil \hfilneg]{Sylvie Monniaux}  

\address{Sylvie Monniaux \newline
I2M - CMI - Technop\^ole de Ch\^ateau-Gombert - 39 rue Fr\'ed\'eric Joliot-Curie - 13453 Marseille Cedex 13}
\email{sylvie.monniaux@univ-amu.fr}

\subjclass[2000]{47D06, 34G10, 35B45, 35Q30} 
\keywords{Maximal regularity, partial differential equations, evolution equations, 
Navier-Stokes equations, existence of solutions, uniqueness of solutions}

\begin{abstract}
In the last decades, a lot of progress has been made on the subject of maximal regularity. The property of maximal $L^p$ regularity is an 
a priori estimate and reads as follows:

For A the negative generator of an analytic semigroup on a Banach space $X$, for $1<p<\infty$, for $0<T<=\infty$, 
does there exist $C_p>0$ a constant such that for all $f\in L^p(0,T;X)$, 
there exists a unique solution $u\in L^p(0,T;D(A))\cap W^{1,p}(0,T;X)$ of the equation $\partial_t(u)+Au=f$, $u(0)=0$ with the estimate
\[
\|\partial_t(u)\|_{L^p(0,T;X)}+\|Au\|_{L^p(0,T;X)}\le C_p \|f\|_{L^p(0,T;X)} ?
\]
It started with a paper by De Simon in 1964 in which the author proved maximal $L^p$ regularity for negative generators of 
analytic semigroups in Hilbert spaces. The next big step has been made by Dore and Venni in their 1987 paper on operators 
admitting bounded imaginary powers. The final result on maximal regularity was done by Weis in his 2001 paper where he 
gave a characterisation of negative generators of analytic semigroups in Banach spaces which have the maximal regularity 
property. These results are all about linear theory of unbounded operators in Banach spaces.

I will then show how to use this property to find solutions or to prove uniqueness of solutions of semi linear partial differential equations 
of parabolic type such as the non linear heat equation and the Navier-Stokes system.
\end{abstract}

\maketitle \numberwithin{equation}{section}
\newtheorem{theorem}{Theorem}[section]
\newtheorem{corollary}[theorem]{Corollary}
\newtheorem{proposition}[theorem]{Proposition}
\newtheorem{lemma}[theorem]{Lemma}

\theoremstyle{definition}
\newtheorem{remark}[theorem]{Remark}
\newtheorem{problem}[theorem]{Problem}
\newtheorem{example}[theorem]{Example}
\newtheorem{definition}[theorem]{Definition}
\allowdisplaybreaks

\section{Introduction}

The property of maximal $L^p$ regularity ($1<p<\infty$) can be formulated as follows:

let $A$ be an (unbounded) operator with dense domain ${\mathsf D}(A)$ defined on a Banach space $X$, 
let $0<T\le \infty$, for $f\in L^p(0,T;X)$, does there exist 
$u\in L^p(0,T;{\mathsf D}(A))$ such that $\partial_tu+Au=f$ and $u(0)=0$? And in that case, the closed graph theorem
implies that there is a constant $C>0$ such that
\begin{equation}
\label{eq:maxreg}
\|u\|_{L^p(0,T;X)}+\|\partial_t u\|_{L^p(0,T;X)}+\|Au\|_{L^p(0,T;X)}\le C \|f\|_{L^p(0,T;X)}.
\end{equation}
It is clear that if $A$ has the maximal $L^p$ regularity property for $T=\infty$, then this is also the case for $T<\infty$. 

In a first part, we will study this property of maximal $L^p$ regularity: what does this imply on the operator $A$? What are the operators
with this property? The non zero initial condition will also be investigated.

In a second part, we will focus on how to apply this property to study non linear equations, and more precisely semi linear problems.
Examples will range from the non linear heat equation to the Navier-Stokes system: existence of solutions, uniqueness of mild solutions...

\section{Maximal regularity}

\subsection{Analytic semigroup}

The maximal $L^p$ regularity property for operators defined on a Hilbert space has been studied by De Simon \cite{DeS64} in 1964. 
Let us start with easy properties that an operator with the maximal $L^p$ property enjoys.

\begin{proposition}
\label{prop:analyticsg}
Let $p\in(1,\infty)$ and $T = \infty$. Assume that $A$ has the maximal $L^p$ regularity property. Then $-A$ generates an analytic semigroup.
\end{proposition}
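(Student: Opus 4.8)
The plan is to verify the resolvent characterisation of generators of (bounded) analytic semigroups: it suffices to produce an angle $\theta_0\in(0,\tfrac\pi2)$ and a constant $M$ such that every $\lambda$ with $\mathrm{Re}\,\lambda>0$ belongs to the resolvent set of $-A$ and
\[
\|\lambda(\lambda+A)^{-1}\|\le M\quad\text{whenever }|\arg\lambda|\le\theta_0 ,
\]
since a uniform bound up to rays arbitrarily close to the imaginary axis can be pushed, by a Neumann series argument, into a sector of half-angle $>\tfrac\pi2$, which is exactly sectoriality of $A$ of angle $<\tfrac\pi2$, i.e. analyticity of the semigroup generated by $-A$. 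So everything reduces to controlling $(\lambda+A)^{-1}$ on the open right half-plane.

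First I would manufacture the resolvent out of the maximal regularity solution operator. Fix $\lambda$ with $\mathrm{Re}\,\lambda>0$ and $x\in X$, and feed the admissible right-hand side $f(t)=e^{-\lambda t}x$ (which lies in $L^p(0,\infty;X)$, with $\|f\|_{L^p}=(p\,\mathrm{Re}\,\lambda)^{-1/p}\|x\|$) into the maximal regularity hypothesis; let $u$ be the resulting solution of $\partial_t u+Au=f$, $u(0)=0$. Taking the Laplace transform $\tilde u(\lambda)=\int_0^\infty e^{-\lambda t}u(t)\,dt$ (which converges absolutely because $e^{-\lambda\,\cdot}\in L^{p'}$ and $u\in L^p$, and which commutes with the closed operator $A$ so that $A\tilde u(\lambda)=\int_0^\infty e^{-\lambda t}Au(t)\,dt$), an integration by parts using $u(0)=0$ and $u\in W^{1,p}\hookrightarrow C_0$ turns the equation into
\[
(\lambda+A)\,\tilde u(\lambda)=\tilde f(\lambda)=\tfrac{1}{2\lambda}\,x .
\]
Because $x$ is arbitrary this already shows that $\lambda+A$ is \emph{surjective} and that $\xi:=2\lambda\,\tilde u(\lambda)$ is a preimage of $x$, i.e. a candidate value for $(\lambda+A)^{-1}x$.

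Next I would read off the sharp bound from the maximal regularity estimate \eqref{eq:maxreg}, using the $Au$-term rather than the $u$-term (the latter is too lossy in $\lambda$). Since $\|A\tilde u(\lambda)\|\le(p'\,\mathrm{Re}\,\lambda)^{-1/p'}\|Au\|_{L^p}\le C\,(p'\,\mathrm{Re}\,\lambda)^{-1/p'}\|f\|_{L^p}\le C'\,\|x\|/\mathrm{Re}\,\lambda$, the identity $\lambda\tilde u(\lambda)=\tfrac{1}{2\lambda}x-A\tilde u(\lambda)$ gives $\|2\lambda\tilde u(\lambda)\|\le\|x\|/|\lambda|+2C'\|x\|/\mathrm{Re}\,\lambda$. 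On a subsector $|\arg\lambda|\le\theta_0<\tfrac\pi2$ one has $\mathrm{Re}\,\lambda\ge|\lambda|\cos\theta_0$, so the right-hand side is bounded by $M_{\theta_0}\|x\|/|\lambda|$, which is precisely the desired resolvent estimate for the constructed preimage.

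The main obstacle is to promote this bounded right inverse to the genuine resolvent, i.e. to prove that $\lambda+A$ is \emph{injective} on the right half-plane; this does not follow from the uniqueness part of maximal regularity alone, because an eigenvector of $A$ at $-\lambda$ would only produce the \emph{growing} homogeneous solution $e^{\lambda t}\eta\notin L^p$ and hence never contradicts uniqueness. The clean way around it is to check that the family $\{R_\lambda:=2\lambda\,\tilde u_\bullet(\lambda)\}_{\mathrm{Re}\,\lambda>0}$ of right inverses is a \emph{pseudo-resolvent}, i.e. satisfies the resolvent identity $R_\lambda-R_\mu=(\mu-\lambda)R_\lambda R_\mu$ (which one verifies from the explicit dependence of the solution on the forcing frequency) and has dense range; since each $R_\lambda$ is automatically injective as a right inverse, a pseudo-resolvent with these properties is the resolvent of a single closed, densely defined operator, which by $(\lambda+A)R_\lambda=I$ can only be $-A$. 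This forces $\ker(\lambda+A)=\{0\}$, upgrades the above into the true bound $\|\lambda(\lambda+A)^{-1}\|\le M_{\theta_0}$, and, after opening the sector by the Neumann series mentioned at the outset, completes the proof.
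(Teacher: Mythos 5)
There is a genuine gap, and it is quantitative: your choice of forcing $f(t)=e^{-\lambda t}x$ yields, as you correctly compute, $\|2\lambda\tilde u(\lambda)\|\le \|x\|/|\lambda|+2C'\|x\|/\Re e(\lambda)$, hence only $\|\lambda(\lambda+A)^{-1}\|\le M_{\theta_0}$ on proper subsectors $|\arg\lambda|\le\theta_0<\frac{\pi}{2}$ with $M_{\theta_0}\sim 1/\cos\theta_0\to\infty$ as $\theta_0\to\frac{\pi}{2}$. Such sector-dependent bounds are strictly weaker than analyticity: by Hille--Yosida, \emph{every} generator of a bounded $C_0$-semigroup satisfies $\|(\lambda+A)^{-1}\|\le M/\Re e(\lambda)$ and therefore $\|\lambda(\lambda+A)^{-1}\|\le M/\cos\theta_0$ on each such subsector --- including non-analytic examples like the translation group on $L^p({\mathds{R}})$. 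Accordingly your opening reduction is false: the Neumann-series continuation across the imaginary axis requires a bound on $\|(\lambda+A)^{-1}\|$ that stays of size $O(1/|\lambda|)$ \emph{uniformly} as $\Re e(\lambda)\to 0^+$ (each disk of invertibility has radius $1/\|(\lambda+A)^{-1}\|$, and with $M_{\theta_0}\to\infty$ these disks shrink before you reach the axis). The loss is built into your test function: its Laplace transform $\widehat f(\lambda)=\frac{1}{2\lambda}x$ decays in $|\lambda|$ while $\|f\|_{L^p}\simeq(\Re e(\lambda))^{-1/p}\|x\|$, so the normalisation $R_\lambda=2\lambda\tilde u(\lambda)$ injects a raw factor $|\lambda|$ that your estimates can only pay for with powers of $(\Re e(\lambda))^{-1}$.

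The paper's proof avoids exactly this degeneration by taking $f_z(t)=e^{zt}$ for $0\le t\le 1/\Re e(z)$ and $f_z(t)=0$ beyond, whose Laplace transform at $z$ equals $1/\Re e(z)$ while $\|f_z\|_{L^p}\simeq(\Re e(z))^{-1/p}$; with $R_zx=\Re e(z)\int_0^\infty e^{-zt}u_z(t)\,{\rm d}t$ all powers of $\Re e(z)$ cancel in H\"older's inequality, so the $\|u\|_{L^p}$-term of \eqref{eq:maxreg} gives $\|R_z\|\le C$ and the integration by parts (using the $\|\partial_t u\|_{L^p}$-term) gives $\|R_z\|\le C'/|z|$, both uniformly on the \emph{whole} open right half-plane; the combined bound $M/(1+|z|)$ is the one that does continue past the axis. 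You should also be aware that your pseudo-resolvent identification is incomplete: injectivity of the right inverses only makes $\{R_\lambda\}$ the resolvent of \emph{some} closed operator $B$, and $(\lambda+A)R_\lambda={\rm I}$ merely gives $B\subseteq A$; upgrading to $B=A$ is equivalent to the injectivity of $\lambda+A$ on ${\mathsf D}(A)$, which is the very point you set out to prove. The paper disposes of this more simply: by uniqueness in the maximal regularity problem the solution operator commutes with $A$ on ${\mathsf D}(A)$, which yields the left-inverse identity $R_z(zx+Ax)=x$ for $x\in{\mathsf D}(A)$ and kills $\ker(z+A)$ outright; together with the right-inverse identity this makes $R_z$ the genuine resolvent.
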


\begin{proof}
Let $z\in{\mathds{C}}$ with $\Re e(z)>0$ and define the function $f_z\in L^p(0,\infty)$ by 
\[
f_z(t)=e^{zt} \mbox{ if }0\le t\le \frac{1}{\Re e(z)} \mbox{ and }
f_z(t)=0 \mbox{ if }t>\frac{1}{\Re e(z)}.
\] 

Let $x\in X$: denote by $u_z$ the solution of $\partial_t u+A u=f_z x$, $u_z(0)=0$ and let
\[
R_zx:=\Re e(z)\int_0^\infty e^{-zt}u_z(t)\,{\rm d}t.
\] 
One can prove that, using \eqref{eq:maxreg}, $\|R_zx\|_X\le C_p \|x\|_X$.
An easy integration by parts gives also that 
\[
R_zx=\frac{\Re e(z)}{z}\int_0^\infty e^{-zt}\partial_t u_z(t)\,{\rm d}t
\] 
and that $\|R_zx\|_X\le C_p'\frac{1}{|z|}\|x\|_X$. Putting these two estimates together gives
\[
\|R_zx\|_X\le M_p\,\frac{1}{1+|z|}\|x\|_X,
\]
where $M_p$ is a constant depending only on $p$ and on the constant $C$ in \eqref{eq:maxreg}.

Now, for $x\in{\mathsf D}(A)$, it is easy to see that $R_z(zx+Ax)=x$, so that since ${\mathsf D}(A)$ is dense in $X$
$R_z$ is the resolvent of $-A$ in $z$.
\end{proof}

\begin{remark}
Since $-A$ generates an analytic semigroup $(e^{-tA})_{t\ge 0}$, the solution $u$ of $\partial_t u+Au=f$, $u(0)=0$ is
given by
\begin{equation}
\label{eq:Duhamel}
u(t)=\int_0^t e^{-(t-s)A}f(s)\,{\rm d}s, \quad t\ge0.
\end{equation}

\end{remark}

The next result concerns the independence of the maximal $L^p$ regularity property with respect to $p\in(1,\infty)$.

\begin{proposition}
\label{prop:indep/p}
Let $p_0\in(1,\infty)$. Assume that $A$ is the negative generator of an analytic semigroup with the maximal $L^{p_0}$ regularity property.
Then $A$ has the maximal $L^p$ regularity property for all $p\in(1,\infty)$.
\end{proposition}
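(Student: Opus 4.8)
The plan is to reinterpret the maximal regularity property as the $L^p$-boundedness of a single operator-valued singular integral and then extrapolate in $p$ by vector-valued Calder\'on--Zygmund theory. By Proposition~\ref{prop:analyticsg} the solution is given by the Duhamel formula \eqref{eq:Duhamel}, so proving \eqref{eq:maxreg} amounts to controlling $\|Au\|_{L^p(0,\infty;X)}$, where $Au(t)=\int_0^t Ae^{-(t-s)A}f(s)\,{\rm d}s$. Extending $f$ by zero to the whole line, this is the convolution operator
\[
(Sf)(t)=\int_{\mathds R} K(t-s)f(s)\,{\rm d}s,\qquad K(\tau)=Ae^{-\tau A}\ \text{for }\tau>0,\quad K(\tau)=0\ \text{for }\tau\le 0,
\]
with values in $\mathcal{L}(X)$. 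Maximal $L^{p_0}$ regularity says exactly that $S$ is bounded on $L^{p_0}(\mathds R;X)$; the term $\partial_t u=f-Au$ is then automatically controlled, so the whole problem reduces to the $L^p$-boundedness of $S$.

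Next I would verify that $K$ is a Calder\'on--Zygmund kernel. Analyticity of the semigroup generated by $-A$ yields the standard bounds $\|Ae^{-\tau A}\|_{\mathcal{L}(X)}\le C/\tau$ and $\|A^2e^{-\tau A}\|_{\mathcal{L}(X)}\le C/\tau^2$ for $\tau>0$. Since $K'(\tau)=-A^2e^{-\tau A}$, the mean value inequality gives $\|K(\tau-\sigma)-K(\tau)\|_{\mathcal{L}(X)}\le C|\sigma|/|\tau|^2$ whenever $|\tau|>2|\sigma|$, and integration over $\{|\tau|>2|\sigma|\}$ produces a bound uniform in $\sigma$. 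This is precisely the H\"ormander integral condition for the operator-valued kernel $K$.

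I would then invoke the vector-valued Calder\'on--Zygmund theorem: an operator bounded on $L^{p_0}(\mathds R;X)$ for one exponent $p_0\in(1,\infty)$ whose kernel satisfies the H\"ormander condition is of weak type $(1,1)$, hence by Marcinkiewicz interpolation with the $L^{p_0}$ bound it is bounded on $L^p(\mathds R;X)$ for every $1<p\le p_0$. To reach the exponents $p>p_0$, I would pass to the adjoint: the dual operator $S^*$ is convolution with the kernel $\tau\mapsto(Ae^{-\tau A})^*$, which is built from the analytic semigroup generated by $-A^*$ on $X^*$ and therefore satisfies the same kernel estimates. Running the previous argument for $S^*$ on $L^{p_0'}(\mathds R;X^*)$ and its H\"older conjugates, then dualising, yields boundedness of $S$ on $L^p(\mathds R;X)$ for $p_0\le p<\infty$.

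I expect the verification of the H\"ormander condition to be routine once the analyticity estimates are in hand; the genuinely delicate point is the passage to $p>p_0$ by duality, since if $X$ is not reflexive the adjoint semigroup need not be strongly continuous. I would handle this at the level of the kernel operators and their adjoints on $L^{p'}(\mathds R;X^*)$ rather than through semigroup generation, using the resolvent bounds of Proposition~\ref{prop:analyticsg} (which transfer to $-A^*$) to obtain the decay of $(Ae^{-\tau A})^*$ directly.
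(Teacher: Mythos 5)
Your proposal is correct and follows essentially the same route as the paper: both reduce the problem to the $L^p$-boundedness of the operator-valued convolution operator with kernel $k(t)=Ae^{-tA}$ for $t>0$ (and $k(t)=0$ for $t\le 0$), verify the H\"ormander integral condition from the analyticity bounds, and extrapolate via the Benedek--Calder\'on--Panzone theorem (weak type $(1,1)$ plus Marcinkiewicz interpolation for $p\le p_0$, duality for $p>p_0$). In fact you fill in details the paper leaves implicit, namely the kernel estimates $\|Ae^{-\tau A}\|_{\mathcal{L}(X)}\le C/\tau$ and $\|A^2e^{-\tau A}\|_{\mathcal{L}(X)}\le C/\tau^2$ yielding the H\"ormander condition, and the kernel-level handling of the adjoint that avoids strong continuity issues for the dual semigroup.
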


The proof of this proposition relies on a more general result by A. Benedek, A.P. Calder\'on, R. Panzone \cite[Theorem~2]{BCP62}.

\begin{theorem}
Let $X$ be a Banach space. Let $p\in(1,\infty)$ and denote by $k:{\mathds{R}}\to {\mathscr{L}}(X)$ a measurable function 
$k\in L^1_{\rm loc}({\mathds{R}}\setminus\{0\},{\mathscr{L}}(X))$. Let $S\in{\mathscr{L}}(L^p({\mathds{R}},X))$ be the 
convolution operator with $k$, i.e.,
\[
Sf(t)=\int_{\mathds{R}}k(t-s)f(s)\,{\rm d}s, \quad \forall \,f\in L_{\rm c}^\infty({\mathds{R}},X), t\notin{\rm supp}(f).
\]
Assume that there is a constant $c>0$ such that 
\[
\int_{|t|>2|s|}\|k(t-s)-k(t)\|_{{\mathscr{L}}(X)}\,{\rm d}t\le c,\quad \forall\,s\in{\mathds{R}}.
\]
Then $S\in {\mathscr{L}}(L^q({\mathds{R}},X))$ for all $q\in(1,\infty)$.
\end{theorem}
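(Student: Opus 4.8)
The plan is to recognize this statement as the extension of Calderón--Zygmund singular-integral theory to functions valued in an arbitrary Banach space, and to observe that the classical scalar argument carries over essentially verbatim, since it uses only the geometry of the domain ${\mathds{R}}$ and the scalar modulus $\|\cdot\|_X$, never the internal structure of the target $X$. Concretely, I would establish a weak-type $(1,1)$ bound for $S$, interpolate it against the assumed strong $(p,p)$ bound to reach every exponent in $(1,p]$, and then recover the exponents in $[p,\infty)$ by a duality argument.

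For the weak $(1,1)$ estimate --- which I expect to be the main obstacle --- I would fix $f\in L^1({\mathds{R}},X)\cap L^p({\mathds{R}},X)$ and $\lambda>0$ and run the Calderón--Zygmund decomposition on the scalar function $t\mapsto\|f(t)\|_X$ at height $\lambda$. This produces a countable family of disjoint intervals $\{Q_j\}$ with $\sum_j|Q_j|\le\lambda^{-1}\|f\|_{L^1}$ and a splitting $f=g+b$, $b=\sum_j b_j$, where $g$ is bounded in $L^\infty$ and controlled in $L^p$, each $b_j$ is supported in $Q_j$, and --- crucially --- each $b_j$ has vanishing Bochner mean, $\int_{Q_j}b_j\,{\rm d}t=0$. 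The good part is handled by Chebyshev's inequality together with the hypothesis that $S$ is bounded on $L^p$. For the bad part, when $t$ lies in the complement of $2Q_j$ it is outside $\mathrm{supp}(b_j)$, so the kernel representation of $S$ applies; subtracting the mean and invoking the Hörmander condition (with shift equal to the displacement of $s\in Q_j$ from the centre of $Q_j$) gives $\int_{(2Q_j)^c}\|Sb_j(t)\|_X\,{\rm d}t\le c\,\|b_j\|_{L^1}$. Summing over $j$ and using $\sum_j|2Q_j|\le 2\lambda^{-1}\|f\|_{L^1}$ to discard $\bigcup_j 2Q_j$ yields the desired weak bound. The only point requiring care is that each step --- Bochner averages, the mean-zero cancellation, and the kernel estimate --- is legitimate for ${\mathscr{L}}(X)$-valued kernels and $X$-valued functions, but none of these uses anything beyond Bochner integrability.

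With the weak $(1,1)$ bound in hand, I would invoke the Marcinkiewicz interpolation theorem, which is insensitive to the target space because it sees only the distribution function of $\|Sf(\cdot)\|_X$, to obtain boundedness of $S$ on $L^q({\mathds{R}},X)$ for every $q\in(1,p]$.

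Finally, for $q\in[p,\infty)$ I would pass to the adjoint. A direct computation identifies $S^*$ as the convolution operator with kernel $\tilde k(u)=k(-u)^*$ acting on $X^*$-valued functions, and the substitution $u\mapsto -u$ shows that $\tilde k$ satisfies the very same Hörmander condition while $S^*$ is bounded on $L^{p'}({\mathds{R}},X^*)$; applying the first two parts to $S^*$ makes it bounded on $L^r({\mathds{R}},X^*)$ for all $r\in(1,p']$. To transfer this back to $S$ without assuming that $X^*$ has the Radon--Nikodym property, I would avoid identifying the full dual of $L^q({\mathds{R}},X)$ and use instead only the norming duality $\|h\|_{L^q({\mathds{R}},X)}=\sup\{\,|\int\langle h,g\rangle\,{\rm d}t|:g\in L^{q'}({\mathds{R}},X^*),\ \|g\|_{L^{q'}}\le 1\,\}$, which holds for every Banach space by Hahn--Banach. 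Pairing $Sf$ against such a $g$ and moving $S$ onto $g$ bounds $\|Sf\|_{L^q}$ by $\|f\|_{L^q}\,\|S^*\|_{L^{q'}\to L^{q'}}$, and since $q'<p'$ this last norm is finite by the previous step, which settles the range $q\in[p,\infty)$ and completes the proof.
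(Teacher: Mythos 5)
Your proposal follows exactly the paper's own route: a Calder\'on--Zygmund decomposition (performed on the scalar function $t\mapsto\|f(t)\|_X$) to get the weak type $(1,1)$ bound, Marcinkiewicz interpolation against the assumed strong $(p,p)$ bound for exponents in $(1,p]$, and duality for $q\in[p,\infty)$ --- the paper's proof is only a three-line sketch of precisely this outline. Your fleshing-out is sound, and in particular your use of the norming pairing between $L^q({\mathds{R}},X)$ and $L^{q'}({\mathds{R}},X^*)$, which avoids identifying $(L^q({\mathds{R}},X))^*$ and hence any Radon--Nikod\'ym hypothesis on $X^*$, correctly handles the one point where the vector-valued duality step is genuinely more delicate than in the scalar case.
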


\begin{proof}
The proof relies on a Calder\'on-Zygmund decomposition of $f$ to prove that $S$ is of weak type (1,1). Then by interpolation 
(Marcinkiewicz theorem), one deduces boundedness of $S$ on $L^q({\mathds{R}},X)$ for all $q\in (1,p_0]$. To prove boundedness
for $q>p_0$, we argue by duality.
\end{proof}

\begin{proof}[Proof of Proposition~\ref{prop:indep/p}]
Relying on \eqref{eq:Duhamel}, we apply previous theorem with 
\[
k(t)=A e^{-tA} \mbox{ for }t>0 \mbox{ and }k(t)=0 \mbox{ if }t\le 0.
\]
It suffices to show that this kernel $k$ satisfies the assumptions of the theorem.
\end{proof}

We finish this first insight into maximal $L^p$ regularity by giving a reverse statement of Proposition~\ref{prop:analyticsg}
in the case $X$ is a Hilbert space. This result is due to De Simon \cite{DeS64}.

\begin{theorem}
\label{thm:maxregHilbert}
Let $A$ be the negative generator of a bounded analytic semigroup on a Hilbert space $X$. Then $A$ has the
maximal $L^p$ regularity.
\end{theorem}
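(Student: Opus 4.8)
The plan is to prove maximal $L^p$ regularity on a Hilbert space by first establishing the case $p=2$ via Fourier/Plancherel techniques, and then invoking Proposition~\ref{prop:indep/p} to extend to all $p\in(1,\infty)$. Since Proposition~\ref{prop:analyticsg} already tells us that the maximal regularity operator is automatically a negative generator of an analytic semigroup, and Proposition~\ref{prop:indep/p} tells us that the property is independent of $p$ once it holds for a single exponent, the entire theorem reduces to the single claim that a negative generator $A$ of a bounded analytic semigroup on a Hilbert space has the maximal $L^2$ regularity property. This is the natural exponent to attack because $L^2(0,\infty;X)$ with $X$ Hilbert is itself a Hilbert space, so Plancherel's theorem is available.

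First I would reduce to the problem on the whole line. Using the Duhamel representation \eqref{eq:Duhamel}, extend $f$ by zero to $t<0$ and view $u$ as the convolution of $f$ with the kernel $t\mapsto e^{-tA}$ supported on $t\ge 0$; the derivative and the term $Au$ are then convolutions with the kernel $k(t)=Ae^{-tA}$ for $t>0$ as in the proof of Proposition~\ref{prop:indep/p}. The quantity to control is $\|Au\|_{L^2(0,\infty;X)}$, since $\|\partial_t u\|_{L^2}$ is then bounded by the equation $\partial_t u = f - Au$, and $\|u\|_{L^2}$ follows from these. Taking the Fourier transform in the time variable $t$ turns convolution into multiplication: formally $\widehat{Au}(\tau) = A(i\tau + A)^{-1}\widehat{f}(\tau)$, where I use that the Fourier transform of $t\mapsto e^{-tA}\mathds{1}_{t\ge 0}$ is $(i\tau + A)^{-1}$.

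The heart of the matter is then a uniform resolvent bound. By Plancherel's theorem, since $X$ is Hilbert,
\[
\|Au\|_{L^2(\mathds{R};X)}^2 = \frac{1}{2\pi}\int_{\mathds{R}} \bigl\| A(i\tau + A)^{-1}\widehat{f}(\tau)\bigr\|_X^2 \,{\rm d}\tau,
\]
so it suffices to show that the multiplier $m(\tau) = A(i\tau+A)^{-1}$ is uniformly bounded in $\mathscr{L}(X)$: if $\sup_{\tau\in\mathds{R}}\|m(\tau)\|_{\mathscr{L}(X)} \le M$, then $\|Au\|_{L^2} \le M\|f\|_{L^2}$ and we are done. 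The required bound follows from the hypothesis that $-A$ generates a \emph{bounded analytic} semigroup: this is exactly equivalent to the resolvent estimate $\|\lambda(\lambda + A)^{-1}\|_{\mathscr{L}(X)} \le M$ for $\lambda$ in a sector containing the imaginary axis (minus the origin), which upon writing $A(i\tau+A)^{-1} = I - i\tau(i\tau+A)^{-1}$ gives precisely the uniform bound on $m(\tau)$.

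The main obstacle, and the step I expect to require the most care, is the rigorous justification of the Fourier-multiplier manipulations: identifying the Fourier transform of the operator-valued kernel $e^{-tA}\mathds{1}_{t\ge 0}$ with the resolvent $(i\tau+A)^{-1}$, and confirming that the formal identity $\widehat{Au}(\tau)=A(i\tau+A)^{-1}\widehat f(\tau)$ holds in $X$ for a.e. $\tau$. This is genuinely an issue of \emph{a priori} estimates rather than existence: one works first with, say, $f\in L^2\cap L^1$ with values in ${\mathsf D}(A)$, or with $f$ in a suitable dense subspace where $u$ manifestly lies in $L^2(0,\infty;{\mathsf D}(A))\cap W^{1,2}(0,\infty;X)$, establishes the estimate there, and then extends by density using the bound itself. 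Once the $L^2$ estimate is secured, the passage to general $p\in(1,\infty)$ is immediate from Proposition~\ref{prop:indep/p}.
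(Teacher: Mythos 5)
Your proposal is correct and follows essentially the same route as the paper: the paper's proof also establishes the $L^2$ case via the Duhamel representation, the Fourier-in-time identity ${\mathscr{F}}(Au)(\tau)=A(i\tau+A)^{-1}{\mathscr{F}}(f)(\tau)$, Plancherel on the Hilbert space $X$, and the uniform resolvent bound from bounded analyticity, before invoking Proposition~\ref{prop:indep/p} to pass to all $p\in(1,\infty)$. Your additional care about justifying the multiplier identity on a dense subclass is a sound elaboration of what the paper leaves implicit.
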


\begin{proof}
Using \eqref{eq:Duhamel} to represent the solution of $\partial_t u+Au=f$, $u(0)=0$ with $f\in L^2(0,\infty;X)$ and
taking the Fourier-in-time transform ${\mathscr{F}}$ of $Au$ shows that ${\mathscr{F}}(Au)\in L^2(0,\infty;X)$ (since
$-A$ generates a bounded analytic semigroup) and then $\|Au\|_{L^2(0,\infty;X)}\lesssim \|f\|_{L^2(0,\infty;X)}$.
Note that
\[
{\mathscr{F}}(Au)(\tau)=A(i\tau+A)^{-1}{\mathscr{F}}(f)(\tau), \quad \tau\in{\mathds{R}}.
\]
We conclude by applying Proposition~\ref{prop:indep/p}.
\end{proof}

It has been a long open problem whether this last theorem was true in general Banach spaces or not. A result by
T. Coulhon and D. Lamberton \cite{CL86} shows that it is a necessary condition that the Banach space $X$ has the $UMD$ 
property, i.e., the Hilbert transform
\[
{\mathscr{H}}:L^2({\mathds{R}};X)\to L^2({\mathds{R}};X);\quad 
{\mathscr{H}}(f)(t):=\frac{1}{\pi} {\rm p.v.}\int_{\mathds{R}}\frac{f(s)}{t-s}\,{\rm d}s,\ \ \ t\in{\mathds{R}}
\]
is a bounded operator. Examples of $UMD$ spaces are: Hilbert spaces, $L^p(\Omega,X)$ ($\Omega \subset{\mathds{R}}^d$
for $p\in(1,\infty)$ if $X$ is a $UMD$ space. A $UMD$ Banach space is necessarily reflexive. 

\subsection{Maximal regularity in Banach spaces - weighted maximal regularity}

We focus now on the maximal $L^p$ regularity property for negative generators of analytic semigroups in $UMD$ Banach space.

D. Lamberton \cite{La87} proved that the negative generator of an analytic semigroup of contractions on $X=L^2(\Omega,\mu)$ (where
$(\Omega,\mu)$ is a measure space) satisfying $\|e^{-tA}f\|_q\lesssim\|f\|_q$ for all $f\in L^2(\Omega,\mu)\cap L^q(\Omega,\mu)$ 
for all $q\in (1,\infty)$ has the maximal $L^p$ regularity property.

\begin{example}
The negative Laplacian with Dirichlet, Neumann or Robin boundary conditions (on a domain for which the divergence theorem 
for $L^1$ functions applies; this is the case for instance if it is a domain with a Lipschitz boundary) has the maximal $L^p$ regularity property.
\end{example}

A positive result was given by G. Dore and A. Venni the same year in \cite{DV87} where they proved that if $A$ has bounded 
imaginary powers with angle strictly less than $\frac{\pi}{2}$, then it has the maximal $L^p$ regularity property. 

Later, M. Hieber and J. Pr\"u{\ss} \cite{HP97} and 
T. Coulhon and X.T. Duong \cite{CD00} proved that if the semigroup generated by $-A$ has a kernel with gaussian estimates, then
$A$ has the maximal $L^p$ regularity property. This result was generalised by P.C. Kunstmann \cite{Ku08} for negative generators 
of semigroups with integrated gaussian estimates.

A characterisation of operators with the maximal $L^p$ regularity property was given by L. Weis \cite{We01} using the concept of 
${\mathcal{R}}$ boundedness.

\begin{definition}
\label{def:Rbdd}
A family $\tau$ of bounded linear operators from $X$ to $Y$ is called ${\mathcal{R}}$ bounded if there exists a constant $C>0$
such that for all $n\ge 1$, for all $T_1,...,T_n\in \tau$ and all $x_1,...,x_n\in X$,
\[
\int_0^1\bigl\|\sum_{j=1}^nr_j(s)T_jx_j\bigr\|_Y\,{\rm d}s\le C \int_0^1\bigl\|\sum_{j=1}^nr_j(s)x_j\bigr\|_X\,{\rm d}s
\]
where $(R_j)_{j=1,...,n}$ is a sequence of independent $\{-1,1\}$ valued random variables on $[0,1]$ ; for example, the Rademacher
functions $r_j(t)={\rm sgn} \bigl(\sin (2^j\pi t)\bigr)$. The ${\mathcal{R}}$ bound of $\tau$ denoted by ${\mathcal{R}}(\tau)$ is then the 
smallest $C$ for which the above estimate holds.
\end{definition}

\begin{remark}
If $X$ and $Y$ are Hilbert spaces, then a family $\tau$ is ${\mathcal{R}}$ bounded if, and only if, it is bounded.
\end{remark}

We are now in the position to state the result which characterises operators with the maximal $L^p$ regularity property 
due to L. Weis \cite{We01}.

\begin{theorem}
\label{thm:maxregRbdd}
Let $A$ be the negative generator of an analytic semigroup on a $UMD$ Banach space. Then $A$ has the maximal $L^p$ regularity
property if, and only if, the family of resolvents $\{i\sigma(i\sigma{\rm I}+A)^{-1}, \sigma\in{\mathds{R}}\}$ is ${\mathcal{R}}$ bounded.
\end{theorem}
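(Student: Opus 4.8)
The plan is to recast maximal regularity as an operator-valued Fourier multiplier problem and then invoke the vector-valued Mikhlin theorem, whose validity is exactly what the $UMD$ hypothesis secures. First I would use Duhamel's formula \eqref{eq:Duhamel}: extending $f$ by zero to $t<0$, the map $f\mapsto Au=A\int_0^t e^{-(t-s)A}f(s)\,{\rm d}s$ is a causal convolution on $L^p({\mathds{R}};X)$, and taking the Fourier-in-time transform $\mathscr{F}$ exactly as in the proof of Theorem~\ref{thm:maxregHilbert} shows
\[
\mathscr{F}(Au)(\sigma)=M(\sigma)\,\mathscr{F}(f)(\sigma),\qquad M(\sigma):=A(i\sigma{\rm I}+A)^{-1}.
\]
Hence $A$ has maximal $L^p$ regularity if and only if $M$ is an $L^p({\mathds{R}};X)$ Fourier multiplier. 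The algebraic identity
\[
M(\sigma)={\rm I}-i\sigma(i\sigma{\rm I}+A)^{-1}={\rm I}-R(\sigma),\qquad R(\sigma):=i\sigma(i\sigma{\rm I}+A)^{-1},
\]
ties the symbol directly to the resolvent family in the statement.

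For the sufficiency direction I would apply the operator-valued Mikhlin multiplier theorem on $UMD$ spaces: if both $\{M(\sigma):\sigma\neq0\}$ and $\{\sigma M'(\sigma):\sigma\neq0\}$ are $\mathcal{R}$ bounded, then $M$ is an $L^p$ multiplier for every $p\in(1,\infty)$. The $\mathcal{R}$ boundedness of $\{M(\sigma)\}$ is immediate from that of $\{R(\sigma)\}$, since $\{{\rm I}\}$ is $\mathcal{R}$ bounded and $\mathcal{R}$ bounds are subadditive. For the derivative, differentiating gives
\[
\sigma M'(\sigma)=-i\sigma A(i\sigma{\rm I}+A)^{-2}=\bigl({\rm I}-R(\sigma)\bigr)\bigl(-R(\sigma)\bigr)=R(\sigma)^2-R(\sigma),
\]
so $\{\sigma M'(\sigma)\}$ is $\mathcal{R}$ bounded because $\mathcal{R}$ bounds are stable under products and sums. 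Thus the single hypothesis that $\{R(\sigma)\}$ is $\mathcal{R}$ bounded feeds both Mikhlin conditions.

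For the necessity direction I would use the converse principle that a bounded Fourier multiplier on $L^p({\mathds{R}};X)$ automatically has an $\mathcal{R}$ bounded symbol: testing the multiplier operator against sums $\sum_j r_j(\cdot)\,\varphi_j$ of smooth bumps modulated to concentrate near frequencies $\sigma_1,\dots,\sigma_n$ (Lebesgue points of $M$) and averaging over the Rademacher variables recovers the defining inequality of Definition~\ref{def:Rbdd}, with $\mathcal{R}(\{M(\sigma)\})$ controlled by the multiplier norm. Since $R(\sigma)={\rm I}-M(\sigma)$, this yields the $\mathcal{R}$ boundedness of $\{R(\sigma):\sigma\in{\mathds{R}}\}$.

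The main obstacle is the operator-valued Mikhlin theorem itself. Its proof cannot rely on the pointwise kernel estimates of the scalar case; instead it proceeds through a Littlewood--Paley decomposition and a randomised square-function argument in which $\mathcal{R}$ boundedness replaces ordinary uniform boundedness at each dyadic block, and the passage from the block estimates back to an $L^p$ bound is precisely where the $UMD$ property (equivalently, boundedness of the vector-valued Hilbert transform) is indispensable. Establishing this multiplier theorem, together with the permanence properties of $\mathcal{R}$ bounds invoked above, is the technical heart of the argument.
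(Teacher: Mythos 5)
The paper does not prove Theorem~\ref{thm:maxregRbdd} at all: it is stated as a citation of Weis \cite{We01}, and the text moves straight on to weighted maximal regularity. So there is no in-paper argument to compare against; what you have written is, in substance, a faithful outline of the proof in the cited source. Your reduction is the right one: extending by zero and Fourier transforming as in the proof of Theorem~\ref{thm:maxregHilbert} turns maximal regularity into the boundedness of the multiplier $M(\sigma)=A(i\sigma{\rm I}+A)^{-1}={\rm I}-R(\sigma)$ with $R(\sigma)=i\sigma(i\sigma{\rm I}+A)^{-1}$, the sufficiency direction is exactly Weis's operator-valued Mikhlin theorem on $UMD$ spaces fed by the algebra $\sigma M'(\sigma)=R(\sigma)^2-R(\sigma)$ together with the permanence of ${\mathcal{R}}$ bounds under sums and products (the product stability needs Kahane's contraction principle, which is where the randomised structure of Definition~\ref{def:Rbdd} is genuinely used), and the necessity direction is the Cl\'ement--Pr\"u{\ss} converse that a bounded operator-valued multiplier has an ${\mathcal{R}}$ bounded symbol. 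You also correctly identify that the entire weight of the theorem sits in the multiplier theorem itself, whose proof via Littlewood--Paley decomposition and randomisation is where $UMD$ (and, in the standard proofs, the boundedness of the vector-valued Hilbert transform) enters; your proposal acknowledges rather than proves this, which is reasonable at the level of detail the paper itself operates at.

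Two loose ends are worth flagging. First, in the necessity direction the testing argument yields ${\mathcal{R}}$ boundedness of $\{M(\sigma_j)\}$ only at Lebesgue points of the symbol; to reach \emph{every} $\sigma\in{\mathds{R}}$ you should invoke the strong continuity of $\sigma\mapsto R(\sigma)$ on ${\mathds{R}}\setminus\{0\}$ (and at $\sigma=0$ either the expression vanishes or one uses the uniform resolvent bound $\|R_z\|\le M_p/(1+|z|)$ from Proposition~\ref{prop:analyticsg}), since ${\mathcal{R}}$ bounds pass to strong limits. Second, for $M(\sigma)$ to be defined for all real $\sigma$ one needs $i{\mathds{R}}\setminus\{0\}$ in the resolvent set of $-A$, i.e.\ a \emph{bounded} analytic semigroup; in the sufficiency direction this is forced by the ${\mathcal{R}}$ boundedness hypothesis itself, and in the necessity direction it follows from the resolvent estimate just cited, but the step deserves a sentence since the theorem as stated only assumes analyticity.
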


We finish this section with the notion of weighted maximal $L^p$ regularity. The following result is due to J. Pr\"u{\ss} and G. Simonett
\cite[Theorem~2.4]{PS04}.

\begin{theorem}
\label{thm:weightedmaxreg}
Assume that $A$ is an operator having the maximal $L^p$ regularity property on a Banach space $X$ with the UMD property. 
Then $A$ has the weighted maximal $L^p$ regularity property, i.e., for all $\mu\in(\frac{1}{p},1]$, for all $f\in L^p_\mu(0,\infty;X)$,
there exists a unique solution $u\in L^p_\mu(0,\infty;{\sf D}(A))$ with $\partial_t u\in L^p_\mu(0,\infty;X)$ of $\partial_t u+Au=f$
with $u(0)=0$, where
\[
L^p_\mu(0,\infty;X)=\bigl\{u:(0,\infty)\to X ; t\mapsto t^{1-\mu}u(t)\in L^p(0,\infty;X)\bigr\}
\]
equipped with the norm $\|u\|_{L^p_\mu(0,\infty;X)}=\|t\mapsto t^{1-\mu}u(t)\|_{L^p(0,\infty;X)}$.
\end{theorem}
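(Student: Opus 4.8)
The plan is to reduce the weighted estimate to the boundedness of a single operator-valued singular integral and then invoke weighted Calder\'on--Zygmund theory. Recall from the proof of Proposition~\ref{prop:indep/p} that, via the Duhamel formula~\eqref{eq:Duhamel}, the quantity $Au$ is obtained from $f$ by convolution with the kernel $k(t)=Ae^{-tA}$ for $t>0$ and $k(t)=0$ for $t\le0$, and that $\partial_t u=f-Au$ is controlled by the same operator. Writing out the weighted norm, $\|u\|_{L^p_\mu(0,\infty;X)}^p=\int_0^\infty t^{p(1-\mu)}\|u(t)\|_X^p\,{\rm d}t$, the desired a priori estimate is exactly the statement that this convolution operator is bounded on $L^p({\mathds R};X)$ equipped with the weight $w(t)=|t|^{p(1-\mu)}$ (functions being supported on $(0,\infty)$). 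Uniqueness is then routine: a difference of two solutions solves $\partial_t v+Av=0$ with $v(0)=0$, and~\eqref{eq:Duhamel} forces $v\equiv0$.

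The first key step is to identify the admissible weights. For $\mu\in(\frac1p,1]$ one has $1-\mu\in[0,1-\frac1p)$, hence the exponent $\alpha:=p(1-\mu)$ satisfies $0\le\alpha<p-1$. Since the power weight $|t|^\alpha$ belongs to the Muckenhoupt class $A_p({\mathds R})$ precisely when $-1<\alpha<p-1$, the hypothesis $\mu>\frac1p$ is exactly what guarantees $w\in A_p({\mathds R})$. This is the numerical heart of the restriction on $\mu$, and it is what allows the whole weighted machinery to be brought to bear.

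The second step is to verify that the kernel $k$ falls under a weighted operator-valued Calder\'on--Zygmund theorem. Unweighted boundedness on $L^p({\mathds R};X)$ is already supplied by maximal $L^p$ regularity together with Benedek--Calder\'on--Panzone, as in Proposition~\ref{prop:indep/p}. What is needed in addition is an $\mathcal{R}$-bounded version of the H\"ormander regularity condition, i.e.\ $\mathcal{R}$-boundedness of the rescaled families $\{t^{j+1}\partial_t^{j}k(t):t>0\}$ for $j=0,1$. By Weis' characterisation (Theorem~\ref{thm:maxregRbdd}), maximal $L^p$ regularity on the $UMD$ space $X$ is equivalent to $\mathcal{R}$-sectoriality of $A$, which in turn yields $\mathcal{R}$-boundedness of $\{tAe^{-tA}:t>0\}$ and of its $t$-derivatives through standard Cauchy-integral representations of the analytic semigroup. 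With this $\mathcal{R}$-H\"ormander condition in hand, the vector-valued weighted extrapolation/Calder\'on--Zygmund theorem upgrades the unweighted bound to boundedness on $L^p(w;X)$ for every $w\in A_p({\mathds R})$, and in particular for $w(t)=t^{p(1-\mu)}$, giving $\|Au\|_{L^p_\mu}+\|\partial_t u\|_{L^p_\mu}\lesssim\|f\|_{L^p_\mu}$.

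The main obstacle is precisely this last step: ordinary scalar weighted Calder\'on--Zygmund theory does not transfer directly, because the kernel takes values in ${\mathscr L}(X)$ and mere norm boundedness of $k$ is insufficient to control the maximal and square-function estimates underlying the $A_p$ theory. Here both standing hypotheses enter essentially: the $UMD$ property makes the relevant Fourier-multiplier and Hilbert-transform estimates available in the $X$-valued setting, while $\mathcal{R}$-boundedness (rather than uniform boundedness) of the kernel family is what compensates for the failure of pointwise domination and drives the weighted bound. Once the $\mathcal{R}$-H\"ormander condition is established and the vector-valued weighted estimate is justified, specialising the $A_p$ weight to the power weight completes the proof.
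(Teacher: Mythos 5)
The paper offers no proof of Theorem~\ref{thm:weightedmaxreg}: it is quoted directly from Pr\"u{\ss}--Simonett \cite[Theorem~2.4]{PS04}, so your attempt can only be compared with the cited source, and it takes a genuinely different route, which is viable. You correctly isolate the numerical heart: $t^{p(1-\mu)}\in A_p({\mathds{R}})$ exactly when $0\le p(1-\mu)<p-1$, i.e.\ $\mu\in(\frac1p,1]$, and you reduce the theorem to weighted boundedness of convolution with $k(t)=Ae^{-tA}$; this yields in fact the stronger statement of maximal regularity in $L^p(w;X)$ for every $w\in A_p$. Pr\"u{\ss} and Simonett argue more elementarily: they split the Duhamel integral (equivalently, commute the weight $t^{1-\mu}$ past the solution operator), so that the piece where $s\sim t$ is handled by the \emph{unweighted} maximal regularity applied to $s\mapsto s^{1-\mu}f(s)$, while the commutator piece has, by analyticity ($\|Ae^{-rA}\|\le C/r$), a scalar majorant kernel homogeneous of degree $-1$ on $(0,\infty)$, bounded on $L^p(0,\infty)$ by a Schur/Hardy test precisely when $\mu>\frac1p$; no weighted Calder\'on--Zygmund machinery is needed. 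Two caveats about your third and fourth paragraphs. First, the emphasis on ${\mathcal{R}}$-boundedness in the \emph{weighted} step is misplaced: ${\mathcal{R}}$-boundedness (via Theorem~\ref{thm:maxregRbdd} and the UMD hypothesis) is what underlies the unweighted bound, but once that bound is in hand, the upgrade to $A_p$ weights only requires pointwise operator-norm standard estimates $\|k(t)\|_{{\mathscr{L}}(X)}\le C/t$ and $\|\partial_tk(t)\|_{{\mathscr{L}}(X)}\le C/t^2$, which follow from bounded analyticity of the semigroup (Proposition~\ref{prop:analyticsg} with $T=\infty$); the good-$\lambda$/sharp-maximal-function argument is then run on the scalar function $t\mapsto\|Tf(t)\|_X$, so no vector-valued subtlety arises at that stage --- your sketch still closes, since the norm estimates you need are weaker than the ${\mathcal{R}}$-bounds you invoke, but the claim that ${\mathcal{R}}$-boundedness ``drives the weighted bound'' is inaccurate. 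Second, your uniqueness step tacitly uses that the Duhamel formula makes sense for $u\in L^p_\mu$; this is where $\mu>\frac1p$ enters again, since it is exactly the condition ensuring $L^p_\mu(0,\infty;X)\subset L^1_{\rm loc}([0,\infty);X)$ (H\"older, using $t^{\mu-1}\in L^{p'}$ near $0$), and it deserves a line in a complete write-up.
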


\begin{remark}
The weighted maximal $L^p$ regularity property for $\mu=1$ is the usual maximal $L^p$ regularity property.
\end{remark}

\section{Semi-linear equations in critical spaces}

In this section, we describe two semi-linear pdes of the form
\[
\partial_tu+Au=f(u), \quad u(0)=u_0.
\]
We will give existence and uniqueness results using the maximal
$L^p$ regularity property.

\subsection{Mild solutions}

First, let us define what we mean with ``critical space" for a pde of the previous form. Assume that $A$ is an operator on a space
depending of a space variable $x$ homogeneous in the sense that $Au_\lambda (x) = \lambda^\alpha Au(x)$ for all $\lambda>0$, 
$x\in {\mathds{R}}^n$ where $u_\lambda(x)=u(\lambda x)$. Assume moreover that the function $f$ is also 
homogeneous in the sense that $f(\lambda w)=\lambda^\gamma f(w)$ and $f(v_\lambda(t,x))=\lambda^\beta f(v(\lambda^\alpha t,\lambda x))$ 
where $v_\lambda(t,x)=v(\lambda^\alpha t, \lambda x)$ for $\lambda>0$, $t>0$ and $x\in\Omega\subset{\mathds{R}}^n$. We assume that
if $\alpha\neq\beta$, then $\gamma\neq1$. In that case, if $u$ is a solution of $\partial_t u+Au=f(u)$, then 
$(t,x)\mapsto \lambda^{\frac{\alpha-\beta}{\gamma-1}}u(\lambda^\alpha t, \lambda x)$ is also a solution of the same equation
for all $\lambda>0$.

\begin{definition}
\label{def:criticalspace}
In this situation, we call $Y$ a critical space for $\partial_tu+Au=f(u)$ a time-space variable space for which
\[
\bigl\|(t,x)\mapsto \lambda^{\frac{\alpha-\beta}{\gamma-1}}u(\lambda^\alpha t, \lambda x)\bigr\|_Y=\|u\|_Y.
\]
\end{definition}

\begin{example}
If $Y$ is of the form $Y=L^p_t(L^q_x)$ for $1<p,q<\infty$, $Y$ is a critical space if, and only if, 
$\frac{\alpha-\beta}{\gamma-1}=\frac{n}{q}+\frac{\alpha}{p}$.
\end{example}

\begin{definition}
\label{def:mildsol}
Assume that $A$ is the negative generator of an analytic semigroup on a Banach space $X$.
A mild solution on $[0,T]$  ($0<T\le+\infty$) of the equation $\partial_t u+Au=f(u)$ with initial value $u_0\in X$ is a 
continuous function in $t\in [0,T)$ with values in $X$ satisfying the Duhamel formula:
\[
u(t)=e^{-tA}u_0+\int_0^t e^{-(t-s)A}f(u(s))\,{\rm d}s, \quad 0\le t< T.
\]
In other words, $u$ is a fixed point of the map $v\mapsto a+F(v)$ where $a(t)=e^{-tA}u_0$, $t\ge 0$ and
$F(v)(t)=\int_0^t e^{-(t-s)A}f(v(s))\,{\rm d}s$, $0\le t < T$. If $T=+\infty$, $u$ is called a global mild solution.
\end{definition}

The method to prove the existence of global mild solutions for small initial data or local mild solutions if the initial
condition has no size restriction relies on the following fixed point theorem.

\begin{theorem}[Fixed point theorem]
\label{thm:fixedpoint}
Let $Y$ be a Banach space, $a\in Y$ and $F:Y\to Y$ with the property that $F(0)=0$ and there exists $\epsilon >0$ 
and $M>0$ such that
\begin{equation}
\label{est:F}
\|F(u)-F(v)\|_Y\le M \|u-v\|_Y\bigl(\|u\|_Y^\epsilon+\|v\|_Y^\epsilon\bigr),\quad \forall\,u,v\in Y.
\end{equation}
Then for $\delta <\frac{1}{2(2M)^{\frac{1}{\epsilon}}}$, for all $a\in \overline{B}_Y(0,\delta)$, the ball in $Y$ of center $0$
and radius $\delta$, the map $\Phi:u\mapsto a+F(u)$ admits a fixed point, unique in $\overline{B}_Y(0,2\delta)$.
\end{theorem}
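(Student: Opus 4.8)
The plan is to prove this via the Banach fixed point theorem (contraction mapping principle), applied to the map $\Phi(u)=a+F(u)$ restricted to the closed ball $\overline{B}_Y(0,2\delta)$. The two things I must verify are: that $\Phi$ maps this ball into itself, and that $\Phi$ is a strict contraction on it. Both follow by feeding the Lipschitz-type estimate \eqref{est:F} into routine norm manipulations, together with the normalisation $F(0)=0$.

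First I would observe that, setting $v=0$ in \eqref{est:F} and using $F(0)=0$, one gets the growth bound $\|F(u)\|_Y\le M\|u\|_Y^{1+\epsilon}$ for all $u\in Y$. To show $\Phi$ maps $\overline{B}_Y(0,2\delta)$ into itself, take $u$ with $\|u\|_Y\le 2\delta$ and estimate
\[
\|\Phi(u)\|_Y\le \|a\|_Y+\|F(u)\|_Y\le \delta+M(2\delta)^{1+\epsilon}=\delta\bigl(1+M(2\delta)^\epsilon\bigr).
\]
The constraint $\delta<\frac{1}{2(2M)^{1/\epsilon}}$ is equivalent to $(2\delta)^\epsilon<\frac{1}{2M}$, hence $M(2\delta)^\epsilon<\frac12$, so $\|\Phi(u)\|_Y<\delta\cdot\frac32<2\delta$, and the ball is preserved.

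Next I would show $\Phi$ is a contraction on the same ball. For $u,v\in\overline{B}_Y(0,2\delta)$, since $\Phi(u)-\Phi(v)=F(u)-F(v)$, estimate \eqref{est:F} gives
\[
\|\Phi(u)-\Phi(v)\|_Y\le M\|u-v\|_Y\bigl(\|u\|_Y^\epsilon+\|v\|_Y^\epsilon\bigr)\le M\|u-v\|_Y\cdot 2(2\delta)^\epsilon.
\]
Again using $M(2\delta)^\epsilon<\frac12$, the factor $2M(2\delta)^\epsilon$ is strictly less than $1$, so $\Phi$ is a contraction with constant $k:=2M(2\delta)^\epsilon<1$. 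By the Banach fixed point theorem applied to the complete metric space $\overline{B}_Y(0,2\delta)$ (closed in the Banach space $Y$), $\Phi$ admits a unique fixed point there, which is exactly the asserted mild solution.

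The argument has no genuine obstacle; the only point requiring care is bookkeeping of the numerical constant, namely checking that the threshold $\delta<\frac{1}{2(2M)^{1/\epsilon}}$ is precisely what forces both $M(2\delta)^\epsilon<\frac12$ (for self-mapping) and $2M(2\delta)^\epsilon<1$ (for contraction). The uniqueness statement as phrased is local, restricted to $\overline{B}_Y(0,2\delta)$; I would emphasise that uniqueness in the larger space need not hold and that the $2\delta$ radius is the natural invariant ball produced by the self-mapping estimate. One should also note that completeness of $\overline{B}_Y(0,2\delta)$ is inherited from $Y$, so no additional structure is needed.
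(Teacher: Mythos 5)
Your proof is correct and follows essentially the same route as the paper's: verify that $\Phi$ maps $\overline{B}_Y(0,2\delta)$ into itself using $\|F(u)\|_Y\le M\|u\|_Y^{1+\epsilon}$ and $\|a\|_Y\le\delta$, check the contraction constant $2M(2\delta)^\epsilon<1$ via the hypothesis $(2\delta)^\epsilon<\frac{1}{2M}$, and conclude by the Picard contraction principle on the complete metric space $\overline{B}_Y(0,2\delta)$. The only cosmetic difference is that you bound $\|\Phi(u)\|_Y$ by $\delta\bigl(1+M(2\delta)^\epsilon\bigr)<\tfrac{3}{2}\delta$ where the paper uses $M(2\delta)^{1+\epsilon}\le\delta$ to get the bound $2\delta$ directly; both are the same estimate.
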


\begin{proof}
The proof of the theorem follows the lines of \cite[Theorem~15.1]{LR02}, Picard contraction priniciple. 

The choice of $\delta$ implies that $(2\delta)^\epsilon<\frac{1}{2M}$.

First, we prove that if $a\in \overline{B}_Y(0,\delta)$, then $\overline{B}_Y(0,2\delta)$ is stable under the map $\Phi$.  
The estimate on $F$ and the fact that $F(0)=0$ give for $u\in \overline{B}_Y(0,2\delta)$ the estimate
$\|F(u)\|_Y\le M\|u\|^{1+\epsilon}\le M (2\delta)^{1+\epsilon}\le \delta$. With $\|a\|_Y\le \delta$, we
obtain $\|\Phi(u)\|_Y\le 2\delta$.

Next, $\Phi:\overline{B}_Y(0,2\delta)\to \overline{B}_Y(0,2\delta)$ is a contraction. We have that
\[
\|\Phi(u)-\Phi(v)\|_Y\le M(\|u\|_Y^\epsilon+\|v\|_Y^\epsilon)\|u-v\|_Y\le 2M(2\delta)^\epsilon\|u-v\|_Y
\] 
with $2M(2\delta)^\epsilon<1$ as established in the previous step.

We conclude by Picard contraction principle that $\Phi$ has a unique fixed point in $\overline{B}_Y(0,2\delta)$.
\end{proof}

\subsection{The non linear heat equation: existence and uniqueness of solutions}

The non linear heat equation in ${\mathds{R}}^n$ has the form
\begin{equation}
\label{NLHE1}
\tag{NLHE1}
\left\{
\begin{array}{rcl}
\partial_tu-\Delta u&=& |u|^{\nu-1}u\quad \mbox{ in }(0,\infty)\times{\mathds{R}}^n\\
u(0)&=&0 \quad \mbox{ in }{\mathds{R}}^n
\end{array}
\right.
\end{equation}
or
\begin{equation}
\label{NLHE2}
\tag{NLHE2}
\left\{
\begin{array}{rcl}
\partial_tu-\Delta u&=& |u|^\nu \quad \mbox{ in }(0,\infty)\times{\mathds{R}}^n\\
u(0)&=&0 \quad \mbox{ in }{\mathds{R}}^n
\end{array}
\right.
\end{equation}
for $1<\nu<\infty$. We propose here to give a proof of existence and uniqueness of solutions of \eqref{NLHE1} or \eqref{NLHE2}
as in \cite[Theorem~1]{W80} using the property of maximal regularity (see also \cite{W81}. With the notations used to define a critical space for 
these equations, we have in our case $\alpha=2$, $\beta=0$ and $\gamma=\nu$ so that a critical space for
\eqref{NLHE1} or \eqref{NLHE2} is of the form $L^p_t(L^q_x)$ with $\frac{n}{q}+\frac{2}{p}=\frac{2}{\nu-1}$. In the special case
$\nu=2$ (the quadratic non linear heat equation), this condition becomes $\frac{n}{q}+\frac{2}{p}=2$.

\begin{theorem}[Existence]
\label{thm:existNLHE}
Let $\nu\in(1,\infty)$.
Assume that $\nu<p,q<\infty$ satisfy $\frac{n}{q}+\frac{2}{p}=\frac{2}{\nu-1}$.
There exists $\eta>0$ such that for all $u_0\in \dot B^{-2/p}_{p,q}$ with $\|u_0\|_{\dot B^{-2/p}_{p,q}}\le \eta$,
the equations \eqref{NLHE1} and \eqref{NLHE2} have a mild solution in $L^p_t(L^q_x)$.
\end{theorem}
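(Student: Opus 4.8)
The plan is to exhibit a mild solution, in the sense of Definition~\ref{def:mildsol}, as a fixed point of the map $\Phi(v)=a+F(v)$ in the critical space $Y=L^p\bigl(0,\infty;L^q(\mathds{R}^n)\bigr)$, and to obtain this fixed point from the Fixed Point Theorem~\ref{thm:fixedpoint} with exponent $\epsilon=\nu-1$. Here $A=-\Delta$ acts on $X=L^q(\mathds{R}^n)$, the analytic semigroup is the heat semigroup $e^{t\Delta}$, and, writing $f(w)=|w|^{\nu-1}w$ for \eqref{NLHE1} and $f(w)=|w|^{\nu}$ for \eqref{NLHE2}, we set $a(t)=e^{t\Delta}u_0$ and $F(v)(t)=\int_0^t e^{(t-s)\Delta}f(v(s))\,{\rm d}s$. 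Since $f(0)=0$ we have $F(0)=0$, so the two things to check are the smallness $\|a\|_Y\le\delta$ and the nonlinear estimate~\eqref{est:F}.

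For the smallness of $a$, I would invoke the heat-semigroup description of the Besov space in the statement: $u_0\in\dot B^{-2/p}_{p,q}$ if and only if $e^{t\Delta}u_0\in L^p(0,\infty;L^q(\mathds{R}^n))$, with
\[
\|a\|_Y=\|e^{t\Delta}u_0\|_{L^p(0,\infty;L^q)}\simeq\|u_0\|_{\dot B^{-2/p}_{p,q}}.
\]
Indeed, because the Besov regularity is exactly $-2/p$, the integral $\int_0^\infty\|e^{t\Delta}u_0\|_{L^q}^p\,{\rm d}t$ is, up to the weight ${\rm d}t/t$, the Besov quasi-norm built on the heat semigroup. Choosing $\eta$ small enough, depending on this equivalence constant and on the radius $\delta$ of Theorem~\ref{thm:fixedpoint}, then forces $\|a\|_Y\le\delta$ whenever $\|u_0\|_{\dot B^{-2/p}_{p,q}}\le\eta$.

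The heart of the argument is the nonlinear estimate, and it is the step I expect to be delicate. Both nonlinearities satisfy the pointwise bound $|f(w_1)-f(w_2)|\le C\,|w_1-w_2|\bigl(|w_1|^{\nu-1}+|w_2|^{\nu-1}\bigr)$ valid for $\nu\ge1$, so no separate treatment of \eqref{NLHE1} and \eqref{NLHE2} is needed. Applying Hölder's inequality in $x$ and then in $t$ to the product of $\nu$ factors shows that $g:=f(u)-f(v)$ lies in $L^{p/\nu}(0,\infty;L^{q/\nu}(\mathds{R}^n))$ with $\|g\|_{L^{p/\nu}_tL^{q/\nu}_x}\le C\,\|u-v\|_Y\bigl(\|u\|_Y^{\nu-1}+\|v\|_Y^{\nu-1}\bigr)$; this uses $\nu<p$ and $\nu<q$ so that $p/\nu,q/\nu>1$. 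It then remains to prove that $g\mapsto\int_0^t e^{(t-s)\Delta}g(s)\,{\rm d}s$ maps $L^{p/\nu}_tL^{q/\nu}_x$ into $Y$. For this I would combine the $L^{q/\nu}$--$L^q$ smoothing bound $\|e^{\tau\Delta}h\|_{L^q}\lesssim\tau^{-\frac{n(\nu-1)}{2q}}\|h\|_{L^{q/\nu}}$ with the Hardy--Littlewood--Sobolev inequality applied to the resulting time kernel $(t-s)^{-\frac{n(\nu-1)}{2q}}$. That kernel sends $L^{p/\nu}_t$ into $L^p_t$ exactly when $\frac{n(\nu-1)}{2q}=1-\frac{\nu-1}{p}$, which, after dividing by $\nu-1$, is precisely the scaling identity $\frac{n}{q}+\frac{2}{p}=\frac{2}{\nu-1}$ of the hypothesis; the exponent moreover lies in $(0,1)$ because $p>\nu$. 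This gives \eqref{est:F} with $\epsilon=\nu-1$ and $M=M(n,p,q,\nu)$.

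With both hypotheses of Theorem~\ref{thm:fixedpoint} in hand, for every $u_0$ with $\|u_0\|_{\dot B^{-2/p}_{p,q}}\le\eta$ the map $\Phi$ has a unique fixed point $u$ in $\overline{B}_Y(0,2\delta)$, and this $u$ is the desired mild solution in $L^p_t L^q_x$. The main obstacle is the nonlinear estimate of the previous paragraph: one must route the pointwise Lipschitz bound on $f$ through Hölder and then through parabolic smoothing and Hardy--Littlewood--Sobolev, verifying that the integrability lost to the nonlinearity is compensated exactly by the smoothing of the heat semigroup — a balance that closes only on the critical line $\frac{n}{q}+\frac{2}{p}=\frac{2}{\nu-1}$.
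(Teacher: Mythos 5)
Your proposal is correct, and it proves the theorem by a genuinely different route than the paper at the one nontrivial step. You agree with the paper on the framework: fixed point of $\Phi(v)=a+F(v)$ in the critical space $Y=L^p_t(L^q_x)$ via Theorem~\ref{thm:fixedpoint} with $\epsilon=\nu-1$, the bound $\|a\|_Y\lesssim\|u_0\|_{\dot B^{-2/p}_{p,q}}$ from the heat-semigroup characterisation of the Besov space (your ``weight ${\rm d}t/t$'' remark is exactly right: with regularity $-2/p$ the factor $t^{-s p/2}$ cancels the measure ${\rm d}t/t$, giving plain $L^p({\rm d}t)$), and the pointwise bound \eqref{eq:estxnu} followed by H\"older to place $f(u)-f(v)$ in $L^{p/\nu}_t(L^{q/\nu}_x)$, legitimate since $p,q>\nu$. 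Where you diverge is in showing that $g\mapsto\int_0^t e^{(t-s)\Delta}g(s)\,{\rm d}s$ maps $L^{p/\nu}_t(L^{q/\nu}_x)$ into $Y$: you use the pointwise smoothing estimate $\|e^{\tau\Delta}h\|_{L^q}\lesssim\tau^{-\frac{n(\nu-1)}{2q}}\|h\|_{L^{q/\nu}}$ and then Hardy--Littlewood--Sobolev in time for the kernel $(t-s)^{-\alpha}$ with $\alpha=\frac{n(\nu-1)}{2q}=1-\frac{\nu-1}{p}\in(0,1)$ (HLS is indeed forced here, since $t^{-\alpha}$ is in no global $L^m$, only in $L^{1/\alpha,\infty}$, so Young's inequality is unavailable --- this is the classical Weissler/Kato scheme). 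The paper instead runs the solution map through maximal $L^{p/\nu}$ regularity of the heat semigroup on $L^{q/\nu}_x$, the mixed derivative embedding \eqref{eq:mixedder} with $s=\frac{\nu-1}{p}$, and the two Sobolev embeddings \eqref{eq:sobemb}, the critical relation $\frac{n}{q}+\frac{2}{p}=\frac{2}{\nu-1}$ entering through the choice of $s$ just as it enters your HLS exponent. Your argument is more elementary and self-contained, needing only Gaussian kernel bounds and HLS; the paper's argument is deliberately routed through maximal regularity (the theme of the survey), which buys two things your proof does not: it records the extra parabolic regularity $F(u)\in W^{1,p/\nu}_t(L^{q/\nu}_x)\cap L^{p/\nu}_t(W^{2,q/\nu}_x)$ rather than a mere $L^p_t L^q_x$ bound, and it generalises to abstract negative generators with maximal regularity on $UMD$ spaces for which pointwise $L^r$--$L^s$ smoothing estimates may be unavailable. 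One cosmetic caveat: like the paper's own proof, yours produces the fixed point in $Y$ only, whereas Definition~\ref{def:mildsol} nominally asks for continuity in $t$ with values in $X$; since the theorem explicitly asks for a mild solution ``in $L^p_t(L^q_x)$'', you match the paper's standard and this is not a gap.
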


\begin{proof}
The treatments of \eqref{NLHE1} and \eqref{NLHE2} are the same, so we will focus only on solutions of \eqref{NLHE1}.
We start by rewriting \eqref{NLHE1} in the integral form as in Definition~\ref{def:mildsol}: $u\in Y=L^p_t(L^q_x)$ is a mild
solution of \eqref{NLHE1} if
\begin{equation}
\label{eq:mildNLHE}
u(t)= e^{t\Delta}u_0+\int_0^t e^{(t-s)\Delta}|u(s)|^{\nu-1}u(s)\,{\rm d}s, \quad t\in(0,\infty),
\end{equation}
and define $a:t\mapsto e^{t\Delta}u_0$ and
\begin{equation}
\label{def:F}
F(u)(t)=\int_0^t e^{(t-s)\Delta}|u(s)|^{\nu-1}u(s)\,{\rm d}s, \quad t\in(0,\infty).
\end{equation}
In that case, $u$ is a mild solution in $Y$ if $u=a+F(u)$.
{\sl Claim~1: $a$ belongs to $Y$.}

\noindent
By definition of the Besov space $\dot B^{-2/p}_{p,q}$, $a\in Y$ and $\|a\|_Y\le c\eta$ where $c$ is a constant.

\medskip

\noindent
{\sl Claim~2: $F$ maps $Y$ to $Y$.}

\noindent
This comes from the maximal regularity property satisfied by the heat semigroup and
the mixed derivative result:
\begin{equation}
\label{eq:mixedder}
W^{1,p}_t(L^q_x)\cap L^p_t(W^{2,q}_x)\hookrightarrow W^{s,p}_t(W^{2(1-s),q}_x),\quad s\in(0,1).
\end{equation}
For $u\in L^p_t(L^q_x)$, we have that $|u|^{\nu-1}u\in L^{p/\nu}_t(L^{q/\nu}_x)$ and the mixed derivative result
for $p/\nu$ instead of $p$, $q/\nu$ instead of $q$ and $s=\frac{\nu-1}{p}$, using the condition linking $p$ and $q$
and the Sobolev embeddings 
\begin{equation}
\label{eq:sobemb}
W^{\frac{\nu-1}{p},\frac{p}{\nu}}_t\hookrightarrow L^p_t \mbox{ in dimension 1}\quad\mbox{ and }\quad 
W^{\frac{n(\nu-1)}{q},\frac{q}{\nu}}_x\hookrightarrow L^q_x \mbox{ in dimension }n 
\end{equation}
yield the claim.

\medskip

\noindent
{\sl Claim~3: $F$ satisfies \eqref{est:F} with $\epsilon=\nu-1$.}

\noindent
It is immediate that $F(0)=0$. Next,
for all $x,y\in{\mathds{R}}$, we have 
\begin{equation}
\label{eq:estxnu}
\bigl||x|^{\nu-1}x-|y|^{\nu-1}y\bigr|\le \nu (|x|^{\nu-1}+|y|^{\nu-1})|x-y|.
\end{equation}
 Therefore using the same tools as in Claim~1, we obtain
\[
\|F(u)-F(v)\|_Y\le M \|u-v\|_Y (\|u\|_Y^{\nu-1}+\|v\|_Y^{\nu-1})
\]
with $M=\nu C_{p/\nu,q/\nu} M_{\frac{\nu-1}{p},p/\nu,q/nu} \alpha_{\nu,p}\beta_{\nu,q}$, 
where $C_{p,q}$ is the constant appearing in \eqref{eq:maxreg} for $T=\infty$, $X=L^q_x$ and $p$,
$M_{s,p,q}$ is the embedding constant in the mixed derivative result \eqref{eq:mixedder}, $\alpha_{\nu,p}$ and 
$\beta_{\nu,q}$ are the constants in the Sobolev embeddings \eqref{eq:sobemb}.

\medskip

\noindent
To end the proof, it suffices to apply Theorem~\ref{thm:fixedpoint} for $\delta = c\eta$ ($\eta$ small enough so that
the condition on $\delta$ is satisfied). 
\end{proof}

The next result is a uniqueness result for solutions of \eqref{NLHE1} or \eqref{NLHE2} in ${\mathscr{C}}([0,T];L^q_x)$.
The limiting case $p=\infty$ in the previous result gives $q=\frac{n(\nu-1)}{2}$: the condition $\frac{q}{\nu}>1$ implies then
that $n>\frac{2\nu}{\nu-1}$. Remark that in the case of the quadratic non linear heat equation ($\nu=2$), this imposes that 
we work in dimension $n\ge 5$.

\begin{theorem}[Uniqueness]
\label{thm:uniquenessNLHE}
Let $\nu>1$, let $n>\frac{2\nu}{\nu-1}$, let $q=\frac{n(\nu-1)}{2}$. Let $T>0$. Then for all $u_0\in L^q_x$ there exists at most one mild solution
in ${\mathscr{C}}([0,T];L^q_x)$ of \eqref{NLHE1} or \eqref{NLHE2}.
\end{theorem}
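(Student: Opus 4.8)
As in the existence theorem the two equations are handled identically, so let me concentrate on \eqref{NLHE1}. The plan is to first show that two mild solutions $u,v\in\mathscr{C}([0,T];L^q_x)$ with $u(0)=v(0)=u_0$ must coincide on a short initial interval $[0,T_0]$, and then to propagate this equality up to time $T$ by a continuation argument. Throughout I will use the smoothing estimates for the heat semigroup, $\|e^{t\Delta}g\|_{L^b_x}\le C\,t^{-\frac n2(\frac1a-\frac1b)}\|g\|_{L^a_x}$ for $1\le a\le b\le\infty$, together with the pointwise bound \eqref{eq:estxnu} (and its analogue for $|u|^\nu$).

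The natural auxiliary space is the weighted one, with norm
\[
\|u\|_{\mathcal{K}_{T_0}}:=\sup_{0<t\le T_0}t^{\sigma}\|u(t)\|_{L^r_x},\qquad \sigma=\tfrac n2\bigl(\tfrac1q-\tfrac1r\bigr),
\]
where $r\in(q,\nu q)$; for such $r$ one has $\tfrac{n(\nu-1)}{2r}<1$ and $\nu\sigma<1$, and moreover $r>\nu$ since $q>\nu$, which is where the hypothesis $n>\frac{2\nu}{\nu-1}$ enters, so that $L^{r/\nu}_x$ is a genuine Lebesgue space. Writing $w=u-v$, the Duhamel formula and \eqref{eq:estxnu} give $\bigl|\,|u|^{\nu-1}u-|v|^{\nu-1}v\,\bigr|\le\nu\bigl(|u|^{\nu-1}+|v|^{\nu-1}\bigr)|w|$, and Hölder's inequality places this in $L^{r/\nu}_x$ with norm controlled by $\bigl(\|u\|_{L^r_x}^{\nu-1}+\|v\|_{L^r_x}^{\nu-1}\bigr)\|w\|_{L^r_x}$. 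Applying the $L^{r/\nu}_x$--$L^r_x$ smoothing estimate and using the scaling relation $q=\tfrac{n(\nu-1)}2$, the time integral reduces to a Beta integral in which the total power of $t$ vanishes, and one obtains
\[
\|w\|_{\mathcal{K}_{T_0}}\le C_0\bigl(\|u\|_{\mathcal{K}_{T_0}}^{\nu-1}+\|v\|_{\mathcal{K}_{T_0}}^{\nu-1}\bigr)\|w\|_{\mathcal{K}_{T_0}},
\]
where, crucially, the constant $C_0$ is independent of $T_0$ because the estimate is scale invariant.

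The scale invariance is exactly why shrinking $T_0$ does not by itself produce a small factor; the smallness must instead come from the solutions themselves. This is supplied by the \emph{regularizing effect}: any mild solution $u\in\mathscr{C}([0,T];L^q_x)$ in fact belongs to $\mathcal{K}_{T_0}$ and satisfies $\lim_{t\to0}t^{\sigma}\|u(t)\|_{L^r_x}=0$. The key input is that $u_0\in L^q_x$ --- the \emph{critical} space --- so that $t^{\sigma}\|e^{t\Delta}u_0\|_{L^r_x}\to0$ as $t\to0$; this follows from the uniform bound $t^\sigma\|e^{t\Delta}u_0\|_{L^r_x}\le C\|u_0\|_{L^q_x}$ together with the density of $L^q_x\cap L^r_x$ in $L^q_x$. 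Feeding this into the Duhamel formula and bootstrapping via interpolation between $L^q_x$ and $L^r_x$ yields the claim. Granting it, I may choose $T_0$ so small that $C_0\bigl(\|u\|_{\mathcal{K}_{T_0}}^{\nu-1}+\|v\|_{\mathcal{K}_{T_0}}^{\nu-1}\bigr)<1$, which forces $\|w\|_{\mathcal{K}_{T_0}}=0$; hence $u=v$ on $(0,T_0]$, and by continuity on all of $[0,T_0]$.

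Finally, to reach time $T$, I set $t^{*}=\sup\{t\in[0,T]:u\equiv v\text{ on }[0,t]\}$. If $t^{*}<T$, then $u(t^{*})=v(t^{*})\in L^q_x$ by continuity, and applying the previous step to the equation restarted at $t^{*}$ shows $u\equiv v$ on $[t^{*},t^{*}+\delta]$ for some $\delta>0$, contradicting the definition of $t^{*}$. Therefore $t^{*}=T$ and uniqueness holds on $[0,T]$. I expect the main obstacle to be the regularizing effect: because the problem is critical, no integrability can be gained from the nonlinearity in a single smoothing step (the relevant exponent is exactly $1$), and one must genuinely exploit that $u_0$ lies in $L^q_x$ rather than in a larger space in order to obtain the vanishing of the weighted norm at $t=0$ that drives the whole argument.
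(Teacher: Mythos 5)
There is a genuine gap, and it sits exactly where you flagged it: the ``regularizing effect'' lemma, i.e.\ the claim that \emph{every} mild solution $u\in\mathscr{C}([0,T];L^q_x)$ belongs to the Kato class $\mathcal{K}_{T_0}$ with $t^{\sigma}\|u(t)\|_{L^r_x}\to0$ as $t\to0$. Your sketch (``feeding this into the Duhamel formula and bootstrapping via interpolation between $L^q_x$ and $L^r_x$'') does not close. From the only a priori information, $u\in L^\infty(0,T;L^q_x)$, the nonlinearity lies in $L^\infty_t(L^{q/\nu}_x)$, and the smoothing estimate $L^{q/\nu}_x\to L^r_x$ costs $(t-s)^{-\frac{n}{2}(\frac{\nu}{q}-\frac1r)}=(t-s)^{-\frac{\nu}{\nu-1}+\frac{n}{2r}}$, whose exponent is $\ge 1$ precisely when $r\ge q$ — a divergent time integral for every $r$ in your admissible range $(q,\nu q)$. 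Interpolating between $L^q_x$ and $L^r_x$ presupposes control of $u$ in $L^r_x$, which is what you are trying to prove, so the bootstrap is circular; and your density argument gives vanishing only for the \emph{linear} part $t^\sigma\|e^{t\Delta}u_0\|_{L^r_x}$, not for $F(u)$ unless $u$ is already known to lie in $\mathcal{K}_{T_0}$ with small norm. This is not a technical wrinkle: a priori membership of an arbitrary continuous-in-$L^q$ mild solution in the Kato class is exactly the obstruction that made uniqueness in the critical space a hard problem (cf.\ \cite{FLRT00}, \cite{Mo99} for the Navier--Stokes analogue); it is known essentially only \emph{a posteriori}, as a consequence of the uniqueness theorem, so your plan reduces the theorem to a lemma at least as strong as the theorem itself. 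Your scale-invariance computation (the constant $C_0$ independent of $T_0$, the Beta integral, the range $r\in(q,\nu q)$) and the continuation argument in the last paragraph are correct, but they cannot be fed.

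The paper avoids this dead end by extracting smallness from a different source. It keeps the critical norms ($L^p(0,\tau;L^q_x)$, estimated via maximal regularity and the mixed derivative embedding, with a Lipschitz constant independent of $\tau$ — the analogue of your scale-invariant $C_0$), and then splits the weight in \eqref{eq:estxnu} around the free evolution of a \emph{smooth approximation} of the datum: with $u_{0,\epsilon}\in\mathscr{C}^\infty_c$, $a_\epsilon(t)=e^{t\Delta}u_{0,\epsilon}$, one writes $|u|^{\nu-1}=(|u|^{\nu-1}-|a_\epsilon|^{\nu-1})+|a_\epsilon|^{\nu-1}$. The difference terms become small as $\tau\to0$ up to $\bigl|\|u_0\|_{L^q_x}^{\nu-1}-\|u_{0,\epsilon}\|_{L^q_x}^{\nu-1}\bigr|$, which is controlled by choosing $\epsilon$ — this uses only the strong continuity of $u,v,a_\epsilon$ at $t=0$ in the critical norm, which is part of the hypothesis $u,v\in\mathscr{C}([0,T];L^q_x)$, not an extra regularity claim. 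The remaining term, with the subcritical weight $|a_\epsilon|^{\nu-1}\in L^\infty_t(L^{n/(\nu-1)}_x)$, gains an explicit factor $\sqrt{\tau}$ through the smoothing estimate $\|e^{r\Delta}f\|_{L^q_x}\lesssim r^{-1/2}\|f\|_{L^{nq/(n+q)}_x}$. Together these give a contraction factor $\tfrac34$ on $[0,\tau]$, and the iteration proceeds as in your final paragraph. If you want to salvage your weighted-norm framework, you would need to import this approximation-and-splitting mechanism (or the Besov-space techniques of \cite{FLRT00}) to replace the unproven regularization lemma.
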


\begin{proof}
The strategy of the proof is to assume that there are two mild solutions $u$ and $v$ in ${\mathscr{C}}([0,T];L^q_x)$
for the same initial value $u_0$ so that $u-v=F(u)-F(v)$ for $F$ defined in \eqref{def:F}. Then we show that $u$ and $v$ 
necessarily coincide in $L^p(0,\tau;L^q_x)$ for $\tau>0$ and one $p>1$: $\|u-v\|_{L^p(0,\tau;L^q_x)}=0$. 
Iterating this procedure ultimately shows that $u=v$ on $[0,T]$.

\medskip

\noindent
{\bf Step 1:}
Let $p>1$. Using \eqref{eq:estxnu}, we have the following estimate for every $0<\tau\le T$
\[
\|F(u)-F(v)\|_{L^p(0,\tau;L^q_x)}\le M \|u-v\|_{L^p(0,\tau;L^q_x)}\bigl(\|u\|_{L^\infty(0,\tau;L^q_x)}^{\nu-1}+\|v\|_{L^\infty(0,\tau;L^q_x)}^{\nu-1}\bigr)
\]
with $M$ independent of $\tau$. This follows from the same arguments as in the proof of Claim~3 of Theorem~\ref{thm:existNLHE}. 

\medskip

\noindent
{\bf Step 2:}
Let $\epsilon >0$.
We can approximate $u_0\in L^q_x$ by a $u_{0,\epsilon}\in {\mathscr{C}}_c^\infty({\mathds{R}}^n)$ such that
\[
\|u_{0,\epsilon}-u_0\|_{L^q_x}\le \epsilon.
\]
We denote by $a_\epsilon$ the function defined on $(0,\infty)$ by $a_\epsilon(t)= e^{t\Delta}u_{0,\epsilon}$ and we have for all $\epsilon>0$
\begin{align}
\label{eq:u-aepsilon}
&\|u\|^{\nu-1}_{L^\infty(0,\tau;L^q_x)}-\|a_\epsilon\|^{\nu-1}_{L^\infty(0,\tau;L^q_x)}
\xrightarrow[\tau\to 0]{} \|u_0\|_{L^q_x}^{\nu-1}-\|u_{0,\epsilon}\|_{L^q_x}^{\nu-1} \nonumber\\ 
\mbox{and }\quad&\|v\|^{\nu-1}_{L^\infty(0,\tau;L^q_x)}-\|a_\epsilon\|^{\nu-1}_{L^\infty(0,\tau;L^q_x)}
\xrightarrow[\tau\to 0]{} \|u_0\|_{L^q_x}^{\nu-1}-\|u_{0,\epsilon}\|_{L^q_x}^{\nu-1}.
\end{align}

\medskip

\noindent
{\bf Step 3:}
Since $s\mapsto |a_\epsilon(s)|^{\nu-1} (u(s)-v(s))$ belongs to 
$L^p(0,\tau;L^{\frac{nq}{n+q}}_x)$ whenever $u,v\in L^p(0,\tau;L^q_x)$ and $a_\epsilon \in L^\infty(0,\tau;L^{\frac{n}{\nu-1}}_x)$,
we have 
\begin{align*}
&\bigl\|t\mapsto \int_0^t e^{(t-s)\Delta}|a_\epsilon(s)|^{\nu-1} (u(s)-v(s))\,{\rm d}s\bigr\|_{L^p(0,\tau;L^q_x)}\\
\lesssim &\sqrt{\tau} \|u-v\|_{L^p(0,\tau, L^q_x)} \|a_\epsilon\|_{L^\infty(0,\tau;L^{\frac{n}{\nu-1}}_x)}^{\nu-1}
\end{align*}
using the fact that 
\[
\|e^{r\Delta}f\|_{L^q_x}=\|(-\Delta)^{\frac{1}{2}}e^{r\Delta}(-\Delta)^{-\frac{1}{2}}f\|_{L^q_x}
\lesssim \tfrac{1}{\sqrt{r}}\|(-\Delta)^{-\frac{1}{2}}f\|_{L^q_x}\lesssim \tfrac{1}{\sqrt{r}}\|f\|_{L^{\frac{nq}{n+q}}_x}
\] for all $r>0$
and all $f\in L^{\frac{nq}{n+q}}_x$, by the Sobolev embedding $W^{1,\frac{nq}{n+q}}_x\hookrightarrow L^q_x$ in dimension $n$.

\medskip

\noindent
{\bf Step 4:}
For every $\epsilon>0$, we estimate $F(u)-F(v)$ as
\begin{align*}
\bigl|F(u)(t)-F(v)(t)\bigr| \le&
\int_0^t e^{(t-s)\Delta} \bigl|\bigl((|u(s)|^{\nu-1}-|a_\epsilon(s)|^{\nu-1})\\
&+(|v(s)|^{\nu-1}-|a_\epsilon(s)|^{\nu-1})+2|a_\epsilon(s)|^{\nu-1}\bigr)(u(s)-v(s))\bigr|\,{\rm d}s.
\end{align*}
Therefore, applying Step 2 and Step 3, there exists a constant $C>0$ such that
\begin{align*}
\|F(u)-F(v)\|_{L^p(0,\tau;L^q_x)} \le C &\|u-v\|_{L^p(0,\tau;L^q_x)} 
\Bigl(\bigl|\|u\|^{\nu-1}_{L^\infty(0,\tau;L^q_x)}-\|a_\epsilon\|^{\nu-1}_{L^\infty(0,\tau;L^q_x)}\bigr| \\
&+|\|v\|^{\nu-1}_{L^\infty(0,\tau;L^q_x)}-\|a_\epsilon\|^{\nu-1}_{L^\infty(0,\tau;L^q_x)}\bigr|\\
&+\sqrt{\tau}\|a_\epsilon\|^{\nu-1}_{L^\infty(0,\tau;L^{\frac{n}{\nu-1}}_x}\Bigr)
\end{align*}
Since $u-v=F(u)-F(v)$ for $u$ and $v$ mild solutions of \eqref{NLHE1}
We conclude by choosing $\epsilon>0$ small enough such that 
$\bigl|\|u_0\|_{L^q_x}^{\nu-1}-\|u_{0,\epsilon}\|_{L^q_x}^{\nu-1}\bigr|\le \frac{1}{8C}$ and then $\tau>0$ small enough such that
\[
\bigl|\|u\|^{\nu-1}_{L^\infty(0,\tau;L^q_x)}-\|a_\epsilon\|^{\nu-1}_{L^\infty(0,\tau;L^q_x)}\bigr|\le \frac{1}{4C},
\]
\[
\bigl|\|v\|^{\nu-1}_{L^\infty(0,\tau;L^q_x)}-\|a_\epsilon\|^{\nu-1}_{L^\infty(0,\tau;L^q_x)}\bigr|\le \frac{1}{4C},
\]
and 
\[
\sqrt{\tau}\|a_\epsilon\|^{\nu-1}_{L^\infty(0,\tau;L^{\frac{n}{\nu-1}}_x}\le \frac{1}{4C}.
\]
We then obtain that $\|u-v\|_{L^p(0,\tau;L^q_x)} \le\frac{3}{4}\|u-v\|_{L^p(0,\tau;L^q_x)}$ which shows that 
$u=v$ on $[0,\tau)$. 

\medskip

\noindent

{\bf Step 5:} By continuity in time of $u$ and $v$, we have equality on the closed segment $[0,\tau]$.
If $\tau<T$, we can repeat this argument starting from $\tau$ instead of $0$ until we reach $T$.
\end{proof}

\subsection{Navier-Stokes equations}

The same arguments can be applied to Navier-Stokes equations:
\begin{equation}
\label{eq:NS}
\tag{NS}
\left\{
\begin{array}{rclcl}
\partial_tu-\Delta u+\nabla\pi+\nabla\cdot (u\otimes u)&=&0&\mbox{in}&(0,\infty)\times{\mathds{R}}^n\\
{\rm div}\,u&=&0&\mbox{in}&(0,\infty)\times{\mathds{R}}^n\\
u(0)&=&u_0&\mbox{in}&{\mathds{R}}^n,
\end{array}
\right.
\end{equation}
where $u\otimes v$ denotes the matrix $(u_iv_j)_{1\le i,j\le n}$ and $\nabla\cdot M$, the divergence of the matrix 
$M=(m_{i,j})_{1\le i,j\le n}$, is the vector obtained by taking the divergence of the lines:
$\nabla\cdot M= \bigl(\sum_{i=1}^n\partial_im_{i,j}\bigr)_{j=1,...,n}$.
It is easy to see that the invariance for this system is given by $u_{\lambda}(t,x)=\lambda u(\lambda^2t, \lambda x)$ 
and $\pi_{\lambda}(t,x)=\lambda^2\pi((\lambda^2t, \lambda x)$ so that a critical space of the form $L^p_t(L^q_x)$ ($1<p,q<\infty$) for
\eqref{eq:NS} must satisfy the relation $\frac{2}{p}+{n}{q}=1$. A mild solution of \eqref{eq:NS} is given by the formula
\begin{equation}
\label{def:mildsolNS}
u(t)=e^{t\Delta}u_0-\int_0^te^{(t-s)\Delta}{\mathbb{P}}\bigl(\nabla\cdot(u(s)\otimes u(s))\bigr)\,{\rm d}s, \quad t\ge0,
\end{equation}
where ${\mathbb{P}}$ is the Helmholtz projection onto divergence-free vector fields (its Fourier symbol is given by
$\bigl(\delta_{i,j}-\frac{\xi_i\xi_j}{|\xi|^2}\bigr)_{1\le i,j\le n}$). Existence results of mild solutions of \eqref{eq:NS} go back 
to Fujita and Kato \cite{FK64} and Kato \cite{Ka84}.
The same methods as in the case of the non linear heat equation 
apply. We obtain the following existence and uniqueness results.

\begin{theorem}[Existence]
Assume that $1<p,q<\infty$ satisfy $\frac{n}{q}+\frac{2}{p}=1$.
There exists $\eta>0$ such that for all $u_0\in \dot B^{-2/p}_{p,q}$ with ${\rm div}\,u_0=0$ and $\|u_0\|_{\dot B^{-2/p}_{p,q}}\le \eta$,
the system \eqref{eq:NS} has a mild solution in $L^p_t(L^q_x)$.
\end{theorem}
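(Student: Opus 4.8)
The plan is to recast the integral identity \eqref{def:mildsolNS} as a fixed point equation $u=a+F(u)$ in the critical space $Y=L^p_t(L^q_x)$ and then to invoke Theorem~\ref{thm:fixedpoint}, following the scheme of the proof of Theorem~\ref{thm:existNLHE}. Here $a(t)=e^{t\Delta}u_0$ and
\[
F(u)(t)=-\int_0^t e^{(t-s)\Delta}\,{\mathbb{P}}\bigl(\nabla\cdot(u(s)\otimes u(s))\bigr)\,{\rm d}s,\quad t>0.
\]
Exactly as in Claim~1 of Theorem~\ref{thm:existNLHE}, the membership $a\in Y$ together with the bound $\|a\|_Y\le c\eta$ is nothing but the definition of the homogeneous Besov norm $\dot B^{-2/p}_{p,q}$, so this step is immediate; the hypothesis ${\rm div}\,u_0=0$ ensures that $a(t)$ is divergence free for every $t$, consistently with the Helmholtz projection ${\mathbb{P}}$.

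First I would record that the critical relation $\frac{n}{q}+\frac{2}{p}=1$ forces $\frac{2}{p}<1$ and $\frac{n}{q}<1$, hence $p>2$ and $q>n\ge 2$, so that $\frac p2,\frac q2\in(1,\infty)$ and both maximal regularity on $L^{q/2}_x$ with exponent $p/2$ and the boundedness of Riesz-type multipliers on $L^{q/2}_x$ are at our disposal. For $u\in Y$ one has $u\otimes u\in L^{p/2}_t(L^{q/2}_x)$ by H\"older, and the bilinear identity $u\otimes u-v\otimes v=u\otimes(u-v)+(u-v)\otimes v$ will furnish the factor $\|u\|_Y+\|v\|_Y$ required by \eqref{est:F}, so that the nonlinearity satisfies \eqref{est:F} with $\epsilon=1$.

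The technical heart of the argument is to absorb the spatial derivative carried by the nonlinearity. I would factor ${\mathbb{P}}\nabla\cdot=(-\Delta)^{\frac12}\,S$, where $S={\mathbb{P}}\nabla\cdot(-\Delta)^{-\frac12}$ is a matrix built from the Helmholtz projection and the Riesz transforms and is therefore bounded on $L^{q/2}_x$. Writing $F(u)=-(-\Delta)^{\frac12}w$ where $w$ solves $\partial_t w-\Delta w=S(u\otimes u)$ with $w(0)=0$, the maximal $L^{p/2}$ regularity of the heat semigroup on $L^{q/2}_x$ gives $w\in W^{1,p/2}_t(L^{q/2}_x)\cap L^{p/2}_t(W^{2,q/2}_x)$. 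The mixed derivative estimate \eqref{eq:mixedder}, used with exponent $p/2$, integrability $q/2$ and $s=\tfrac1p$, then places $w$ in $W^{1/p,p/2}_t(W^{2-2/p,q/2}_x)$, and applying $(-\Delta)^{\frac12}$ costs exactly one spatial derivative, whence $F(u)\in W^{1/p,p/2}_t(W^{1-2/p,q/2}_x)$.

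It remains to close the scheme with two Sobolev embeddings, in time and in space. In the time variable $W^{1/p,p/2}_t\hookrightarrow L^p_t$ in dimension one, since $\tfrac1p-\tfrac{2}{p}=-\tfrac1p$; in the space variable $W^{1-2/p,q/2}_x\hookrightarrow L^q_x$ in dimension $n$ precisely when $(1-\tfrac2p)-\tfrac{2n}{q}=-\tfrac nq$, that is, when $\frac2p+\frac nq=1$. Thus it is exactly the scaling relation defining a critical space that makes both embeddings saturate, yielding $\|F(u)-F(v)\|_Y\le M\|u-v\|_Y(\|u\|_Y+\|v\|_Y)$ with $M$ the product of the maximal regularity, multiplier, mixed derivative and embedding constants. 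One then concludes by applying Theorem~\ref{thm:fixedpoint} with $\delta=c\eta$ for $\eta$ small enough. I expect this tracking of regularity through the factorization and the mixed derivative theorem, rather than the concluding fixed point step, to be the main obstacle.
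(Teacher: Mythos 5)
Your proposal is correct and takes essentially the same route as the paper, whose proof consists precisely of the remark that the argument for \eqref{NLHE1} (fixed point via Theorem~\ref{thm:fixedpoint} with $\epsilon=1$, maximal $L^{p/2}$ regularity on $L^{q/2}_x$, the mixed derivative embedding \eqref{eq:mixedder}, and the two Sobolev embeddings) carries over once one uses the boundedness of ${\mathbb{P}}$ on the relevant Sobolev spaces. Your factorization ${\mathbb{P}}\nabla\cdot=(-\Delta)^{\frac12}S$, with $S$ a Riesz-type multiplier bounded on $L^{q/2}_x$, is exactly the intended device for absorbing the extra derivative in the nonlinearity, and your exponent bookkeeping ($s=\frac1p$, $u\otimes u\in L^{p/2}_t(L^{q/2}_x)$, and the scaling relation $\frac2p+\frac nq=1$ making both embeddings close) is accurate.
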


\begin{proof}
The proof follows the lines of the existence result for \eqref{NLHE1} using Theorem~\ref{thm:fixedpoint}, having in mind that ${\mathbb{P}}$ 
is a bounded operator on every $W^{s,q}_x$ if $s\in{\mathds{R}}$ and $1<q<\infty$.
\end{proof}

In the limit $p=\infty$, the relation $\frac{2}{p}+\frac{n}{q}=1$ imposes $q=n$. We state next a uniqueness result for mild solutions
in ${\mathscr{C}}([0,T];L^n_x)$. This result was originally proved by Furioli, Lemari\'e-Rieusset and Terraneo in \cite{FLRT00} by different
methods and in \cite{Mo99} using maximal regularity (see also the book by Lemari\'e-Rieusset \cite{LR02} where he gave at least 3 proofs in 
Chapter~27). For the case of Navier-Stokes equations in bounded Lipschitz domains, see for instance \cite{Mo02} .

\begin{theorem}[Uniqueness]
Let $T>0$. For all $u_0\in L^q_x$ there exists at most one mild solution
in ${\mathscr{C}}([0,T];L^n_x)$ of \eqref{eq:NS}.
\end{theorem}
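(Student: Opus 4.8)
The plan is to follow, almost verbatim, the five-step scheme of the proof of Theorem~\ref{thm:uniquenessNLHE}, the only structural change being that the nonlinearity $|u|^{\nu-1}u$ is replaced by the bilinear term $\mathbb{P}\bigl(\nabla\cdot(u\otimes u)\bigr)$. Suppose $u,v\in\mathscr{C}([0,T];L^n_x)$ are two mild solutions of \eqref{eq:NS} with the same datum $u_0$, and set $w=u-v$. Writing $B(f,g)(t)=\int_0^te^{(t-s)\Delta}\mathbb{P}\bigl(\nabla\cdot(f(s)\otimes g(s))\bigr)\,{\rm d}s$, the Duhamel formula \eqref{def:mildsolNS} together with the bilinearity of $B$ gives $w=-B(u,w)-B(w,v)$, the free evolution $e^{t\Delta}u_0$ cancelling. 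As in Theorem~\ref{thm:uniquenessNLHE}, the aim is to prove $\|w\|_{L^p(0,\tau;L^n_x)}=0$ for one auxiliary exponent $p\in(1,\infty)$ and some small $\tau>0$; since $u,v$ are continuous into $L^n_x$ on a compact interval, $w\in L^p(0,\tau;L^n_x)$ automatically, so this quantity is finite.

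The estimates on $B$ rest on the smoothing bound
\[
\|e^{r\Delta}\mathbb{P}\nabla\cdot G\|_{L^n_x}\lesssim \tfrac{1}{\sqrt r}\,\|G\|_{L^n_x},\qquad r>0,
\]
which follows from $\|e^{r\Delta}(-\Delta)^{1/2}\|_{\mathscr{L}(L^n_x)}\lesssim r^{-1/2}$, the boundedness of the zeroth-order operator $(-\Delta)^{-1/2}\mathbb{P}\nabla\cdot$ on $L^n_x$, and the fact that $\mathbb{P}$ is a bounded Fourier multiplier on every $L^q_x$, $1<q<\infty$. I then mimic Steps~2--4: approximate $u_0$ by $u_{0,\epsilon}\in\mathscr{C}_c^\infty({\mathds{R}}^n)$ with ${\rm div}\,u_{0,\epsilon}=0$ and $\|u_{0,\epsilon}-u_0\|_{L^n_x}\le\epsilon$, put $a_\epsilon(t)=e^{t\Delta}u_{0,\epsilon}$, and split $u=a_\epsilon+(u-a_\epsilon)$ and $v=a_\epsilon+(v-a_\epsilon)$ inside the two bilinear terms. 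For the smooth parts $B(a_\epsilon,w)$ and $B(w,a_\epsilon)$ one has $a_\epsilon\in L^\infty(0,\tau;L^\infty_x)$, so the tensor keeps one $L^\infty_x$ factor, lies in $L^n_x$, and the displayed bound produces $\int_0^t(t-s)^{-1/2}\,{\rm d}s\lesssim\sqrt\tau$; this yields the factor $\sqrt\tau\,\|a_\epsilon\|_{L^\infty(0,\tau;L^\infty_x)}$, exactly as the term $\sqrt\tau\|a_\epsilon\|^{\nu-1}$ appears in Step~4. For the remaining parts the continuity in time, $\|u-a_\epsilon\|_{\mathscr{C}([0,\tau];L^n_x)}\to\|u_0-u_{0,\epsilon}\|_{L^n_x}$ as $\tau\to0$, makes the corresponding coefficient small.

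Collecting the pieces I expect to arrive at $\|w\|_{L^p(0,\tau;L^n_x)}\le\theta\,\|w\|_{L^p(0,\tau;L^n_x)}$ with $\theta<1$, obtained by fixing $\epsilon$ small (to control $\|u_0-u_{0,\epsilon}\|_{L^n_x}$) and then $\tau$ small (to control the $\sqrt\tau$ term), whence $w=0$ on $[0,\tau)$. Continuity extends this to $[0,\tau]$ and, the constants being independent of the base point, the argument iterates from $\tau$ to cover $[0,T]$, exactly as in Step~5.

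The main obstacle is the pair of rough terms $B(u-a_\epsilon,w)$ and $B(w,v-a_\epsilon)$. Because the nonlinearity carries a full derivative $\nabla\cdot$, while the tensor of $w$ with $u-a_\epsilon$ (resp. $v-a_\epsilon$) has \emph{both} factors in $L^n_x$ and hence lies only in $L^{n/2}_x$, the smoothing bound degrades to the borderline rate $r^{-1}$, whose time convolution is not integrable on $L^p_t$; smallness here therefore cannot be extracted from a power of $\tau$. This is precisely the endpoint difficulty that singles out the critical space $L^n_x$, and it is where the maximal $L^p$ regularity of $-\Delta$ on $L^q_x$ enters. The remedy, which is the method of the paper, is not to run the contraction in $L^p(0,\tau;L^n_x)$ alone but to upgrade $w$, via maximal regularity applied to $\partial_tw-\Delta w=-\mathbb{P}\nabla\cdot(u\otimes w+w\otimes v)$, into a space $L^p_t(W^{2,q}_x)\cap W^{1,p}_t(L^q_x)$ with $q>n$; the mixed-derivative embedding \eqref{eq:mixedder} and the Sobolev embeddings \eqref{eq:sobemb} then trade the lost spatial integrability for a fraction of a time derivative, converting the borderline $r^{-1}$ behaviour into a genuinely contractive, scaling-compatible estimate. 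Making this upgrade quantitative and uniform in $\tau$ is the delicate point on which the whole proof turns.
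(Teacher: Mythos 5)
Your overall skeleton is exactly the paper's: the paper's own proof is a compressed sketch pointing back to the five steps of Theorem~\ref{thm:uniquenessNLHE}, and your reduction $w=-B(u,w)-B(w,v)$, the decomposition through $a_\epsilon(t)=e^{t\Delta}u_{0,\epsilon}$, the $\sqrt\tau$ gain on $B(a_\epsilon,w)$ and $B(w,a_\epsilon)$ from $a_\epsilon\in L^\infty(0,\tau;L^\infty_x)$, and the iteration in Step~5 are all the intended argument. But the one step you leave unproven --- the $\tau$-uniform bound for the rough terms $B(u-a_\epsilon,w)$ and $B(w,v-a_\epsilon)$ --- is the entire content of the theorem (it is precisely the point of \cite{Mo99}), and the functional setting you propose for it would fail. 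With both factors only in $L^n_x$, the tensor lies in $L^p(0,\tau;L^{n/2}_x)$, so the forcing $\mathbb{P}\nabla\cdot(u\otimes w+w\otimes v)$ lives in $L^p_t(W^{-1,n/2}_x)$; it is not in $L^p_t(L^q_x)$ for any $q>n$, so maximal regularity cannot ``upgrade $w$ into $L^p_t(W^{2,q}_x)\cap W^{1,p}_t(L^q_x)$ with $q>n$'' --- the equation cannot even be posed there. Your diagnosis of the borderline rate $r^{-1}$ is correct, but the remedy goes \emph{down} the scale, not up.

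The correct implementation: since $e^{t\Delta}$, $\mathbb{P}$ and $\nabla\cdot$ are commuting Fourier multipliers, peel the divergence off the Duhamel integral and write $B(f,g)=\nabla\cdot Z$ with $Z(t)=\int_0^t e^{(t-s)\Delta}\,\mathbb{P}\bigl(f(s)\otimes g(s)\bigr)\,{\rm d}s$, where $\mathbb{P}$ is bounded on $L^{n/2}_x$ (this requires $n\ge3$, which is anyway needed for the critical space $L^n_x$ to make sense here). Maximal $L^p$ regularity of $-\Delta$ on $L^{n/2}_x$ (available, e.g., by Lamberton's result quoted in Section~2) gives $Z\in L^p(0,\tau;W^{2,n/2}_x)$ with norm $\le C\,\|f\otimes g\|_{L^p(0,\tau;L^{n/2}_x)}\le C\,\|f\|_{L^\infty(0,\tau;L^n_x)}\|g\|_{L^p(0,\tau;L^n_x)}$, and $C$ is independent of $\tau$ automatically: extend $f\otimes g$ by zero to $(0,\infty)$ and restrict --- so the uniformity you flag as ``the delicate point'' costs nothing. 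Then $B(f,g)=\nabla\cdot Z\in L^p(0,\tau;W^{1,n/2}_x)$, and the critical Sobolev embedding $W^{1,n/2}_x\hookrightarrow L^n_x$ lands you back in $L^p_t(L^n_x)$. In particular, no mixed-derivative embedding \eqref{eq:mixedder} and no fractional time derivative are needed: the exponent $p$ passes through maximal regularity unchanged, exactly as in Step~1 of Theorem~\ref{thm:uniquenessNLHE}, where the analogous bound comes from maximal regularity in $L^{q/\nu}_x$ plus the critical embedding $W^{2,q/\nu}_x\hookrightarrow L^q_x$. With the bilinear estimate $\|B(f,g)\|_{L^p(0,\tau;L^n_x)}\le C\|f\|_{L^\infty(0,\tau;L^n_x)}\|g\|_{L^p(0,\tau;L^n_x)}$ in hand, your Steps~2--4 close exactly as you wrote them (fix $\epsilon$, then $\tau$), and the iteration covers $[0,T]$; one last small repair: take $u_{0,\epsilon}$ divergence free, e.g.\ $\mathbb{P}$ applied to a mollified truncation of $u_0$.
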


\begin{proof}
Again, the strategy of the proof and the tools used are the same as in the case of \eqref{NLHE1}. 
We assume that there are two mild solutions $u$ and $v$ of \eqref{eq:NS}, i.e., $u$ and $v$ satisfy both
\eqref{def:mildsolNS}. We prove that for some (all) $p\in(1,\infty)$, there exists $\tau\in(0,T]$ such that
 $\|u-v\|_{L^p(0,\tau;L^n_x)}\le c \|u-v\|_{L^p(0,\tau;L^n_x)}$ with $0<c<1$, so that $u=v$ on $[0,\tau]$ (by
 continuity) and then iterate the procedure.
\end{proof}

\section*{Acknowledgements}
The research is financially supported by a grant ANR-18-CE40-0012 ``Real Analysis and GEometry" (french
research agency).

\end{document}